\theoremstyle{plain}
\newtheorem{theorem}{Theorem}[section]
\newtheorem*{Theorem B}{Theorem B}
\newtheorem*{Theorem A}{Theorem A}
\newtheorem{lemma}{Lemma}[section]
\newtheorem{proposition}{Proposition}[section]
\numberwithin{equation}{section}
\theoremstyle{remark}
\newtheorem{remark}{Remark}[section]
\begin{document}
\title[Submanifolds of Generalized Sasakian-space-forms$\cdots$]{Submanifolds of Generalized Sasakian-space-forms with respect to certain connections}
\author[P. Mandal, S. Kishor and S. K. Hui]{Pradip Mandal, Shyam Kishor and Shyamal Kumar Hui$^{*}$}
\subjclass[2010]{53C15, 53C40}
\keywords{generalized Sasakian-space-forms, semisymmetric metric connection, semisymmetric non-metric connection, Schouten-van Kampen Connection, Tanaka-Webster connection.\\{* corresponding author}}

\begin{abstract}
The present paper deals with some results of submanifolds of generalized Sasakian-space-forms in \cite{ALEGRE3}
with respect to semisymmetric metric connection, semisymmetric non-metric connection,
Schouten-van Kampen connection and Tanaka-webster connection.
\end{abstract}
\maketitle
\section{Introduction}
As a generalization of Sasakian-space-form, Alegre et al. \cite{ALEGRE1} introduced the notion of generalized
Sasakian-space-form as that an almost contact metric manifold
$\bar{M}(\phi,\xi,\eta,g)$
whose curvature tensor $\bar{R}$ of $\bar{M}$ satisfies
\begin{align}
\label{eqn1.3}
\bar{R}(X,Y)Z &=f_1\big\{g(Y,Z)X-g(X,Z)Y\big\}+f_2\big\{g(X,\phi Z)\phi Y\\
\nonumber& - g(Y,\phi Z)\phi X + 2g(X,\phi Y)\phi Z\big\}+f_3\big\{\eta(X)\eta(Z)Y\\
\nonumber& - \eta(Y)\eta(Z)X+g(X,Z)\eta(Y)\xi - g(Y,Z)\eta(X)\xi\big\}
\end{align}
for all vector fields $X$, $Y$, $Z$ on $\bar{M}$ and $f_1,f_2,f_3$ are certain smooth functions on
 $\bar{M}$. Such a manifold of dimension
$(2n+1)$, $n>1$ (the condition $n>1$ is assumed throughout the paper),
is denoted by $\bar{M}^{2n+1}(f_1,f_2,f_3)$ \cite{ALEGRE1}. Many authors studied this space form with different aspects.
For this, we may refer (\cite{HUI1}, \cite{HUI2}, \cite{HUI3}, \cite{HUI4}, \cite{HUI5}, \cite{HUI6}, \cite{KISH} and \cite{HUI8}).
It reduces to Sasakian-space-form if $f_1 = \frac{c+3}{4}$, $f_2 = f_3 = \frac{c-1}{4}$ \cite{ALEGRE1}.

After introducing the semisymmetric linear connection by Friedman and Schouten \cite{FRID},
Hayden \cite{HAYD} gave the idea of metric connection with torsion on a Riemannian manifold.
Later, Yano \cite{YANO} and many others (see, \cite{HUI7}, \cite{SHAIKH1}, \cite{SULAR} and references therein)
studied semisymmetric metric connection in different context. The idea of semisymmetric non-metric
connection was introduced by Agashe and Chafle \cite{AGAS}.

The Schouten-van Kampen connection introduced
 for the study of non-holomorphic manifolds (\cite{SCHO}, \cite{VRAN}). In $2006$, Bejancu \cite{BEJA3}
studied Schouten-van Kampen connection on foliated manifolds. Recently Olszak \cite{OLSZ}
studied Schouten-van Kampen connection on almost(para) contact metric structure.

The Tanaka-Webster connection (\cite{TANA}, \cite{WEBS}) is the canonical affine connection defined on a non-degenerate
pseudo-Hermitian CR-manifold. Tanno \cite{TANN} defined the Tanaka-Webster connection for contact metric manifolds.

The submanifolds of $\bar{M}^{2n+1}(f_1,f_2,f_3)$ are studied in (\cite{ALEGRE3}, \cite{PM1}, \cite{PM2}).
In \cite{ALEGRE3}, Alegre and Carriazo studied submanifolds of $\bar{M}^{2n+1}(f_1,f_2,f_3)$ with respect
to Levi-Civita connection $\bar{\nabla}$. The present paper deals with study of such
submanifolds of $\bar{M}^{2n+1}(f_1,f_2,f_3)$ with respect to semisymmetric metric connection,
semisymmetric non-metric connection, Schouten-van Kampen connection and Tanaka-webster connection respectively.
\section{preliminaries}
In an almost contact metric manifold $\bar{M}(\phi,\xi,\eta,g)$, we have \cite{BLAIR}
\begin{align}
\label{eqn2.1}
\phi^2(X) = -X+\eta(X)\xi,\ \phi \xi=0,
\end{align}
\begin{align}
\label{eqn2.2}
\eta(\xi) = 1,\ g(X,\xi) = \eta(X),\ \eta(\phi X) = 0,
\end{align}
\begin{align}
\label{eqn2.3}
g(\phi X,\phi Y) = g(X,Y)-\eta(X)\eta(Y),
\end{align}
\begin{align}
\label{eqn2.4} g(\phi X,Y) = -g(X,\phi Y).
\end{align}
In
$\bar{M}^{2n+1}(f_1,f_2,f_3)$, we have \cite{ALEGRE1}
\begin{align}
\label{eqn2.6f}
(\bar{\nabla}_X\phi)(Y) = (f_1-f_3)[g(X,Y)\xi - \eta(Y)X],
\end{align}
\begin{align}
\label{eqn2.7g}
\bar{\nabla}_X\xi = -(f_1-f_3) \phi X,
\end{align}
where $\bar{\nabla}$ is the Levi-Civita connection of $\bar{M}^{2n+1}(f_1,f_2,f_3)$.

\indent Let $M$ be a submanifold of
$\bar{M}^{2n+1}(f_1,f_2,f_3)$. If $\nabla$ and $\nabla^\perp$ are the induced
connections on the tangent bundle $TM$ and the normal bundle $T^\perp{M}$ of $M$, respectively
then the Gauss and Weingarten formulae are given by \cite{YANOKON1}
\begin{align}
\label{eqn2.5}
\bar{\nabla}_XY = \nabla_XY +h(X,Y),\ \bar{\nabla}_XV = -A_VX + \nabla_X^{\perp}V
\end{align}
for all $X,Y\in\Gamma(TM)$ and $V\in\Gamma(T^{\perp}M)$, where $h$ and $A_V$ are second fundamental form and shape operator (corresponding to the normal vector field V), respectively and they are related by \cite{YANOKON1} $ g(h(X,Y),V) = g(A_VX,Y)$.

For any $X\in\Gamma(TM)$, we may write
\begin{equation}
\label{eqn2.17f} \phi X=TX+FX,
\end{equation}
where $TX$ is the tangential component and $FX$ is the
normal component of $\phi X$.

In particular, if $F=0$ then $M$ is invariant \cite{BEJA} and here $\phi (TM)\subset TM$.
Also if $T=0$ then $M$ is anti-invariant \cite{BEJA} and here $\phi (TM)\subset T^\bot M$.
Also here we assume that $\xi$ is tangent to $M$.

The semisymmetric metric connection $\widetilde{\bar{\nabla}}$ and the Riemannian connection $\bar{\nabla}$
on ${\bar{M}}^{2n+1}(f_{1},f_{2},f_{3})$ are related by \cite{YANO}
\begin{align}
\label{eqn2.41d}
 \widetilde{\bar{\nabla}}_{X}Y= \bar{\nabla}_X Y+\eta(Y)X-g(X,Y)\xi.
\end{align}
The Riemannian curvature tensor $\widetilde{\bar{R}}$ of  $\bar{M}^{2n+1}(f_{1},f_{2},f_{3})$ with respect to $\widetilde{\bar{\nabla}}$ is
\begin{eqnarray}
    \label{eqn1.3s}
&&\widetilde{\bar{R}}(X,Y)Z \\
\nonumber&&=(f_1-1)\big\{g(Y,Z)X-g(X,Z)Y\big\}+f_2\big\{g(X,\phi Z)\phi Y- g(Y,\phi Z)\phi X\\
\nonumber&&  + 2g(X,\phi Y)\phi Z\big\}+(f_3-1)\big\{\eta(X)\eta(Z)Y - \eta(Y)\eta(Z)X+g(X,Z)\eta(Y)\xi\\
\nonumber&& - g(Y,Z)\eta(X)\xi\big\}+(f_1-f_3)\{g( X, \phi Z)Y-g( Y,\phi Z)X+g(Y,Z)\phi X\\
\nonumber&&-g(X,Z)\phi Y\}.
\end{eqnarray}
The semisymmetric non-metric connection ${\bar{\nabla}}^{'}$ and the Riemannian connection $\bar{\nabla}$
on ${\bar{M}}^{2n+1}(f_{1},f_{2},f_{3})$ are related by \cite{AGAS}
\begin{align}
\label{eqn2.41h}
\bar{\nabla}^{'}_{X}Y= \bar{\nabla}_X Y+\eta(Y)X.
\end{align}
The Riemannian curvature tensor ${\bar{R}}^{'}$ of  $\bar{M}^{2n+1}(f_{1},f_{2},f_{3})$ with respect to ${\bar{\nabla}}^{'}$ is
\begin{eqnarray}
\label{eqn1.3t}
{\bar{R}}^{'}(X,Y)Z &=&f_1\big\{g(Y,Z)X-g(X,Z)Y\big\}+f_2\big\{g(X,\phi Z)\phi Y\\
\nonumber&-&g(Y,\phi Z)\phi X + 2g(X,\phi Y)\phi Z\big\}+f_3\big\{\eta(X)\eta(Z)Y\\
\nonumber& -&\eta(Y)\eta(Z)X+g(X,Z)\eta(Y)\xi - g(Y,Z)\eta(X)\xi\big\}\\
\nonumber&+&(f_1-f_3)[g(X,\phi  Z)Y-g( Y,\phi Z) X]\\
\nonumber&+&\eta(Y)\eta(Z)X-\eta(X)\eta(Z)Y.
\end{eqnarray}
The Schouten-van Kampen connection $\hat{\bar{\nabla}}$ and the Riemannian connection $\bar{\nabla}$
of ${\bar{M}}^{2n+1}(f_{1},f_{2},f_{3})$ are related by \cite{OLSZ}
\begin{align}
\label{eqn2.41k}
\hat{\bar{\nabla}}_{X}Y=\bar{\nabla}_X Y+(f_1-f_3)\eta(Y)\phi X-(f_1-f_3)g(\phi X,Y)\xi.
\end{align}
The Riemannian curvature tensor $\hat{\bar{R}}$ of
$\bar{M}^{2n+1}(f_{1},f_{2},f_{3})$ with respect to $\hat{\bar{\nabla}}$ is
\begin{eqnarray}
\label{eqn1.3v}
&&\hat{\bar{R}}(X,Y)Z \\
\nonumber&&=f_1\big\{g(Y,Z)X-g(X,Z)Y\big\}+f_2\big\{g(X,\phi Z)\phi Y\\
\nonumber&&- g(Y,\phi Z)\phi X+ 2g(X,\phi Y)\phi Z\big\}+\{f_3+(f_1-f_3)^2\}\big\{\eta(X)\eta(Z)Y \\
\nonumber&&- \eta(Y)\eta(Z)X+g(X,Z)\eta(Y)\xi - g(Y,Z)\eta(X)\xi\big\}\\
\nonumber&&+(f_1-f_3)^2\big[g(X,\phi Z)\phi Y-g(Y, \phi Z)\phi X\big].
\end{eqnarray}
The Tanaka-Webster connection $\stackrel{\ast}{\bar{\nabla}}$ and the Riemannian connection $\bar{\nabla}$
of ${\bar{M}}^{2n+1}(f_{1},f_{2},f_{3})$ are related by \cite{CHO}
\begin{align}
\label{eqn2.41e}
 \stackrel{\ast}{\bar{\nabla}}_{X}Y= \bar{\nabla}_X Y+\eta(X)\phi Y+(f_1-f_3)\eta(Y)\phi X-(f_1-f_3)g(\phi X,Y)\xi.
\end{align}
The Riemannian curvature tensor $\stackrel{*}{\bar{R}}$ of
$\bar{M}^{2n+1}(f_{1},f_{2},f_{3})$ with respect to $\stackrel{*}{\bar{\nabla}}$ is
\begin{eqnarray}
\label{eqn1.3u}
&&\stackrel{*}{\bar{R}}(X,Y)Z \\
\nonumber&&=f_1\big\{g(Y,Z)X-g(X,Z)Y\big\}+f_2\big\{g(X,\phi Z)\phi Y- g(Y,\phi Z)\phi X\\
\nonumber&&+ 2g(X,\phi Y)\phi Z\big\}+\{f_3+{(f_1-f_3)^2}\}\big\{\eta(X)\eta(Z)Y - \eta(Y)\eta(Z)X\\
\nonumber&&+g(X,Z)\eta(Y)\xi - g(Y,Z)\eta(X)\xi\big\}+{(f_1-f_3)^2}\big[g(X,\phi Z)\phi Y\\
\nonumber&&-g(Y,\phi Z)\phi X\big]+2(f_1-f_3)g(X,\phi Y)\phi Z.
\end{eqnarray}
\section{Submanifolds of $\bar{M}^{2n+1}(f_1,f_2,f_3)$ with $\widetilde{\bar{\nabla}}$}
\begin{lemma}
If $M$ is invariant submanifold of $\bar{M}^{2n+1}(f_1,f_2,f_3)$ with respect to $\widetilde{\bar{\nabla}}$, then
$\widetilde{\bar{R}}(X,Y)Z$ is tangent to $M$, for any $X,Y,Z\in \Gamma(TM)$.
\end{lemma}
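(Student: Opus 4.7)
The plan is to argue directly from the explicit formula (\ref{eqn1.3s}) for $\widetilde{\bar{R}}$ together with the two defining features of the setting: invariance of $M$ (so $\phi(TM)\subset TM$) and the standing assumption that $\xi$ is tangent to $M$. No geometric identity beyond these is needed, because (\ref{eqn1.3s}) already expresses $\widetilde{\bar{R}}(X,Y)Z$ as a linear combination of certain vector fields with scalar (function) coefficients; I only need to verify that every vector field appearing is itself tangent to $M$ whenever $X,Y,Z\in\Gamma(TM)$.

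Concretely, I would take $X,Y,Z\in\Gamma(TM)$ and walk through the four bracketed groups of (\ref{eqn1.3s}) in order. In the first group the vectors are $X$ and $Y$, which are tangent. In the second group the vectors are $\phi X,\phi Y,\phi Z$; since $M$ is invariant, $\phi$ maps $TM$ into itself, so these are tangent. In the third group the vectors are $X$, $Y$, and $\xi$, all tangent by hypothesis. In the fourth group the vectors are $X$, $Y$, $\phi X$, $\phi Y$, again all tangent by invariance. The scalar coefficients $f_1-1$, $f_2$, $f_3-1$, $f_1-f_3$, $g(Y,Z)$, $g(X,Z)$, $g(X,\phi Z)$, $g(Y,\phi Z)$, $g(X,\phi Y)$, $\eta(X)$, $\eta(Y)$, $\eta(Z)$ do not affect tangency.

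Summing these contributions, $\widetilde{\bar{R}}(X,Y)Z$ is a $C^\infty(M)$-linear combination of elements of $\Gamma(TM)$, hence lies in $\Gamma(TM)$. There is essentially no obstacle: the proof is a direct reading of (\ref{eqn1.3s}) under the hypotheses $\phi(TM)\subset TM$ and $\xi\in\Gamma(TM)$. The only point worth emphasizing explicitly is the role of invariance, which is exactly what is needed to handle the second and fourth bracketed groups where $\phi$ is applied to tangent vectors.
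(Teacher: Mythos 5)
Your proof is correct and takes essentially the same approach as the paper: read off the explicit formula (\ref{eqn1.3s}) and observe that every vector appearing ($X$, $Y$, $\phi X$, $\phi Y$, $\phi Z$, $\xi$) is tangent to $M$ by invariance and the standing assumption that $\xi$ is tangent. Your version is merely more explicit than the paper's one-line argument.
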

\begin{proof}
If $M$ is invariant then from (\ref{eqn1.3s}) we say that $\widetilde{\bar{R}}(X,Y)Z$ is
tangent to $M$ because $\phi X$ and $\phi Y$ are tangent to $M$. This proves the lemma.
\end{proof}
\begin{lemma}
If $M$ is anti-invariant submanifold of $\bar{M}^{2n+1}(f_1,f_2,f_3)$ with respect to $\widetilde{\bar{\nabla}}$, then
\begin{eqnarray}
\label{eqn7.3}
 &&tan(\widetilde{\bar{R}}(X,Y)Z) \\
\nonumber&&=(f_1-1)\big\{g(Y,Z)X-g(X,Z)Y\big\}+(f_3-1)\big\{\eta(X)\eta(Z)Y\\
\nonumber&&-\eta(Y)\eta(Z)X+g(X,Z)\eta(Y)\xi-g(Y,Z)\eta(X)\xi\big\},
\end{eqnarray}
\begin{eqnarray}
\label{eqn7.4}
 nor(\widetilde{\bar{R}}(X,Y)Z)&=&(f_1-f_3)\{g(Y,Z)\phi X-g(X,Z)\phi Y\}
\end{eqnarray}
for any $X,Y,Z\in \Gamma(TM)$.
\end{lemma}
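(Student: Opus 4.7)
The plan is to start from the explicit formula (\ref{eqn1.3s}) for $\widetilde{\bar{R}}(X,Y)Z$ and dissect it term by term, using the anti-invariance hypothesis to eliminate the pieces that either vanish or contribute to one of the two components (tangential versus normal).

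First I would record the consequences of anti-invariance. Since $T = 0$ we have $\phi X = FX \in \Gamma(T^\perp M)$ for every $X \in \Gamma(TM)$, and because $\xi$ is tangent to $M$, both $\xi$ and the vector fields $X,Y,Z$ themselves lie in $\Gamma(TM)$. The metric compatibility then forces $g(X,\phi Z) = g(Y,\phi Z) = g(X,\phi Y) = 0$ whenever the unprimed argument is tangential and the $\phi$-image is normal. This single observation kills the entire $f_2$-bracket in (\ref{eqn1.3s}) as well as the two summands $g(X,\phi Z)Y$ and $g(Y,\phi Z)X$ occurring in the $(f_1-f_3)$-bracket.

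Next I would regroup what survives. The $(f_1-1)$-bracket and the $(f_3-1)$-bracket consist only of $X,Y$, $\xi$ and scalar coefficients, so all their summands lie in $\Gamma(TM)$; adding these gives exactly the right-hand side of (\ref{eqn7.3}), and taking tangential part produces the displayed formula. What remains of the $(f_1-f_3)$-bracket is $g(Y,Z)\phi X - g(X,Z)\phi Y$, and since $\phi X, \phi Y \in \Gamma(T^\perp M)$ this expression is purely normal, matching (\ref{eqn7.4}).

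There is no real obstacle here: the entire argument is a bookkeeping exercise that isolates the terms involving $\phi$ applied to a tangent vector (these are normal) from those that do not (these are tangential). The only point worth stating carefully is that $\xi \in \Gamma(TM)$, so that all $\eta$-terms stay on the tangential side; once that is noted, the decomposition of (\ref{eqn1.3s}) into its tangential and normal parts is immediate.
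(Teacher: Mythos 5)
Your proposal is correct and follows exactly the paper's argument: the paper likewise observes that anti-invariance puts $\phi X,\phi Y,\phi Z$ in $\Gamma(T^\perp M)$, so the $f_2$-terms and the $g(X,\phi Z)Y$, $g(Y,\phi Z)X$ summands of (\ref{eqn1.3s}) vanish, and then reads off the tangential and normal components. Your write-up is simply a more detailed version of the paper's two-line proof.
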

\begin{proof}
Since $M$ is anti-invariant, we have $\phi X,\phi Y\in \Gamma(T^\bot M)$.
Then equating tangent and normal component of (\ref{eqn1.3s}) we get the result.
\end{proof}
\begin{lemma}
If $f_1(p)=f_3(p)$ and $M$ is either invariant or anti-invariant submanifold
of $\bar{M}^{2n+1}(f_1,f_2,f_3)$ with respect to $\widetilde{\bar{\nabla}}$, then
$\widetilde{\bar{R}}(X,Y)Z$ is tangent to $M$ for any $X,Y,Z\in \Gamma(TM)$.
\end{lemma}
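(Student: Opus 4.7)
The plan is to split on the hypothesis ``$M$ is either invariant or anti-invariant'' and, in each case, quote the appropriate earlier lemma from this section. The role of the condition $f_1(p)=f_3(p)$ is only to kill an obstructing normal term that appears in the anti-invariant setting; in the invariant case the condition is not needed at all.

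In the invariant case there is nothing to do beyond invoking Lemma 3.1: that lemma already asserts that $\widetilde{\bar{R}}(X,Y)Z\in\Gamma(TM)$ for all $X,Y,Z\in\Gamma(TM)$, with no restriction on the functions $f_1,f_2,f_3$. Indeed, every summand on the right-hand side of \eqref{eqn1.3s} is tangent to $M$ because $\phi(TM)\subset TM$, so the hypothesis $f_1(p)=f_3(p)$ plays no role here.

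In the anti-invariant case I would appeal to Lemma 3.2, which has already carried out the tangent/normal decomposition: the tangential part of $\widetilde{\bar{R}}(X,Y)Z$ is given by \eqref{eqn7.3}, while the normal part is $(f_1-f_3)\{g(Y,Z)\phi X-g(X,Z)\phi Y\}$ as in \eqref{eqn7.4}. Evaluating at a point $p$ with $f_1(p)=f_3(p)$, the coefficient $(f_1-f_3)$ vanishes and hence so does the whole normal component; thus $\widetilde{\bar{R}}(X,Y)Z$ is tangent to $M$ at $p$. No genuine technical obstacle arises — the result is an immediate pointwise corollary of Lemmas 3.1 and 3.2, and the only care required is to read both the hypothesis and the conclusion pointwise at $p$.
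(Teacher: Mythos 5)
Your proposal is correct and follows exactly the paper's own route: the paper's proof of this lemma is the one-line citation of Lemma 3.1 (invariant case) and Lemma 3.2 (anti-invariant case, where the normal part \eqref{eqn7.4} carries the factor $f_1-f_3$ and so vanishes under the hypothesis). You have simply written out the details the paper leaves implicit; nothing is missing.
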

\begin{proof}
Using Lemma $3.1$ and Lemma $3.2$ we get the result.
\end{proof}
\begin{lemma}
If $M$ is invariant or anti-invariant submanifold of $\bar{M}^{2n+1}(f_1,f_2,f_3)$ with respect to $\widetilde{\bar{\nabla}}$, then
$\widetilde{\bar{R}}(X,Y)V$ is normal to $M$, for any $X,Y,\in \Gamma(TM)$ and $V\in \Gamma(T^\bot M)$.
\end{lemma}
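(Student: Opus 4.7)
The plan is to substitute $Z = V$ into (\ref{eqn1.3s}) and peel away terms using $V \perp TM$. Since $X, Y, \xi \in \Gamma(TM)$ and $V \in \Gamma(T^\perp M)$, we immediately have $g(X, V) = g(Y, V) = 0$ and $\eta(V) = g(V, \xi) = 0$; these three identities annihilate the entire $(f_1 - 1)$-block, the entire $(f_3 - 1)$-block, and the last two summands of the $(f_1 - f_3)$-block of (\ref{eqn1.3s}). What survives is
\begin{align*}
\widetilde{\bar R}(X, Y) V &= f_2\bigl\{g(X, \phi V)\phi Y - g(Y, \phi V)\phi X + 2 g(X, \phi Y)\phi V\bigr\} \\
&\quad + (f_1 - f_3)\bigl\{g(X, \phi V) Y - g(Y, \phi V) X\bigr\}.
\end{align*}

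Next, I would split into the two cases and exploit how $\phi$ acts on $V$. In the invariant case, $\phi W \in \Gamma(TM)$ for every $W \in \Gamma(TM)$, so the Riemannian duality $g(\phi V, W) = -g(V, \phi W) = 0$ forces $\phi V \in \Gamma(T^\perp M)$. Consequently $g(X, \phi V) = g(Y, \phi V) = 0$, and the displayed expression collapses to $2 f_2 g(X, \phi Y)\phi V$, which is manifestly normal to $M$. In the anti-invariant case, $\phi X, \phi Y \in \Gamma(T^\perp M)$ by hypothesis, which makes the first two summands of the $f_2$-block lie in the normal bundle automatically; the plan would then be to invoke the dual relation $\phi V \in \Gamma(TM)$ coming from $\phi(TM) \subset T^\perp M$ and show that the remaining tangential contributions from $2 f_2 g(X, \phi Y)\phi V$ and from the $(f_1 - f_3)$-block combine to zero.

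The main obstacle is the anti-invariant case. Unlike the invariant setting, where all inner products $g(X, \phi V)$ vanish outright, here the surviving tangential pieces do not disappear by pure orthogonality, and one has to track how $\phi$ interchanges tangent and normal bundles to exhibit the required cancellation. My strategy would be to express $g(X, \phi V) = -g(\phi X, V)$ (both factors normal in this case) and rewrite the tangential terms using the skew-symmetry of $\phi$ with respect to $g$, then match them against the $2 f_2 g(X, \phi Y)\phi V$ contribution. If cancellation fails for generic $V$, this would signal that the statement implicitly uses the full anti-invariance hypothesis $\phi(T^\perp M) \subset TM$, which would then need to be invoked explicitly.
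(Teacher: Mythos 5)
Your reduction of (\ref{eqn1.3s}) with $Z=V$ and your treatment of the invariant case are complete and correct: orthogonality kills everything except $2f_2g(X,\phi Y)\phi V$, and $\phi V$ is normal because $g(\phi V,W)=-g(V,\phi W)=0$ for tangent $W$. The gap is the anti-invariant case, which you leave as a plan contingent on a cancellation, and that cancellation does not occur. First, the summand $2f_2g(X,\phi Y)\phi V$ that worries you is not actually the obstacle: for anti-invariant $M$ one has $g(X,\phi Y)=-g(\phi X,Y)=0$ since $\phi X$ is normal and $Y$ is tangent, so it vanishes outright. What genuinely survives is
\[
\widetilde{\bar{R}}(X,Y)V=f_2\bigl\{g(X,\phi V)\phi Y-g(Y,\phi V)\phi X\bigr\}+(f_1-f_3)\bigl\{g(X,\phi V)Y-g(Y,\phi V)X\bigr\},
\]
whose $f_2$-block is normal but whose $(f_1-f_3)$-block is tangential and in general nonzero, because $g(X,\phi V)=-g(\phi X,V)$ is an inner product of two normal vectors. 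Concretely, take a unit $X\in\Gamma(TM)$ orthogonal to $\xi$, set $V=\phi X\in\Gamma(T^{\perp}M)$ and $Y=\xi$; then $g(X,\phi V)=g(X,\phi^2X)=-1$, $g(\xi,\phi V)=0$, $\phi\xi=0$, and the formula gives $\widetilde{\bar{R}}(X,\xi)\phi X=-(f_1-f_3)\xi$, which is tangent. So no rearrangement of the tangential pieces will exhibit the cancellation you hope for; your closing suspicion that something fails ``for generic $V$'' is exactly right, although the extra hypothesis needed is not a stronger form of anti-invariance but the condition $f_1=f_3$.

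For comparison, the paper's own proof of this case simply asserts that $\widetilde{\bar{R}}(X,Y)V=0$ when $M$ is anti-invariant, which the same computation refutes (even the normal $f_2$-part need not vanish). The lemma is true as stated for invariant $M$; for anti-invariant $M$ it becomes true only after adding the hypothesis $f_1=f_3$ (the same hypothesis the authors impose in Lemma~3.3 for the tangential analogue), since that kills the offending tangential block and leaves a manifestly normal expression. You should therefore either restrict your proof to the invariant case or state and use that extra hypothesis explicitly in the anti-invariant case.
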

\begin{proof}
 If $M$ is invariant from (\ref{eqn1.3s}) we have $\widetilde{\bar{R}}(X,Y)V$ normal to $M$, and if $M$ is anti-invariant then $\widetilde{\bar{R}}(X,Y)V=0$ i.e. $\widetilde{\bar{R}}(X,Y)V$ normal to $M$ for any $X,Y,\in \Gamma(TM)$ and $V\in \Gamma(T^\bot M)$.
\end{proof}
\begin{lemma}
let $M$ be a connected submanifold of $\bar{M}^{2n+1}(f_1,f_2,f_3)$ with
respect to $\widetilde{\bar{\nabla}}$. If $f_2(p)\neq0$, $f_1(p)=f_3(p)$ and $TM$ is invariant
under the action of $\widetilde{\bar{R}}(X,Y)$, $X,Y\in \Gamma(TM)$, then $M$ is either invariant or anti-invariant.
\end{lemma}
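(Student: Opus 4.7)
The plan is to exploit the hypothesis $f_1(p) = f_3(p)$ to kill the last bracket in (\ref{eqn1.3s}), then use the assumed $\widetilde{\bar{R}}$-invariance of $TM$ to extract an algebraic identity which forces, at every point, either $F$ or $T$ (from the decomposition (\ref{eqn2.17f})) to vanish on $T_pM$; connectedness will then globalise this pointwise alternative.

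First I would substitute $f_1=f_3$ into (\ref{eqn1.3s}). The $(f_1-1)$-bracket lies in $\Gamma(TM)$ trivially, and since $\xi$ is tangent to $M$ so does the $(f_3-1)$-bracket. Hence the hypothesis $\widetilde{\bar{R}}(X,Y)Z\in\Gamma(TM)$, combined with $f_2(p)\neq 0$, forces the $f_2$-bracket
\[
g(X,\phi Z)\phi Y - g(Y,\phi Z)\phi X + 2\,g(X,\phi Y)\phi Z
\]
to be tangential for all $X,Y,Z\in\Gamma(TM)$. Decomposing $\phi W = TW + FW$ and projecting onto $T^\perp M$ (the scalar coefficients $g(X,\phi Z)$ etc.\ are unaffected), this becomes
\[
g(X,\phi Z)\,FY - g(Y,\phi Z)\,FX + 2\,g(X,\phi Y)\,FZ = 0.
\]
Setting $Y=Z$ and using $g(Y,\phi Y)=0$ (a consequence of (\ref{eqn2.4})) collapses this to $3\,g(X,\phi Y)\,FY = 0$ for all $X,Y \in \Gamma(TM)$.

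Next I would argue pointwise. Fixing $p\in M$ and $Y_p\in T_pM$, the identity $g(X_p,\phi Y_p)\,FY_p = 0$ for every $X_p\in T_pM$ forces either $FY_p=0$ or $\phi Y_p\perp T_pM$, i.e.\ $TY_p=0$. Thus $T_pM = \ker F|_{T_pM} \cup \ker T|_{T_pM}$; since a vector space is never the union of two proper subspaces, one of these kernels must equal $T_pM$. So at every $p$, either $F_p\equiv 0$ on $T_pM$ or $T_p\equiv 0$ on $T_pM$.

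Finally I would globalise via connectedness. Put $U = \{p\in M: F_p = 0\}$ and $V = \{p\in M: T_p = 0\}$; both are closed in $M$ by continuity of $F$ and $T$, and $U\cup V = M$. At any point of $U\cap V$ one would have $\phi(T_pM)=0$, hence $T_pM\subset\ker\phi = \mathbb{R}\xi$, which is impossible once $\dim M\geq 2$. So $U$ and $V$ are disjoint closed subsets with union $M$, each therefore also open, and connectedness forces $M=U$ or $M=V$. The main delicate point I anticipate is exactly this disjointness step (and handling the degenerate case $TM=\mathbb{R}\xi$, in which $\phi|_{TM}\equiv 0$ makes $M$ simultaneously invariant and anti-invariant, so the conclusion is trivial).
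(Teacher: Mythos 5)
Your proposal is correct and follows essentially the same route as the paper: reduce, using $f_1(p)=f_3(p)$ and the tangency of the other brackets, to the condition that $f_2\,g(X,\phi Y)\,FY=0$, and then deduce the pointwise dichotomy $F_p=0$ or $T_p=0$ and globalise by connectedness. The only difference is that the paper delegates this last kernel-union and connectedness step to the proof of Lemma 3.2 of Alegre--Carriazo, whereas you write it out in full (including the correct observation that the two closed sets are disjoint once $\dim M\geq 2$), which is a welcome completion rather than a deviation.
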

\begin{proof}
For $X,Y\in \Gamma(TM)$, we have from  (\ref{eqn1.3s}) that
 \begin{eqnarray}
\label{eqn3.1}
\widetilde{\bar{R}}(X,Y)X&=&(f_1-1)\big\{g(Y,X)X-g(X,X)Y\big\}+f_2\big\{g(X,\phi X)\phi Y\\
\nonumber&-&g(Y,\phi X)\phi X + 2g(X,\phi Y)\phi X\big\}+(f_3-1)\big\{\eta(X)\eta(X)Y \\
\nonumber&-&\eta(Y)\eta(X)X+g(X,X)\eta(Y)\xi - g(Y,X)\eta(X)\xi\big\}\\
\nonumber&+&(f_1-f_3)\{g(\phi Y,X)X-g(\phi X,X)Y+g(Y,X)\phi X\\
\nonumber&-&g(X,X)\phi Y\}.
\end{eqnarray}
Note that $\widetilde{\bar{R}}(X,Y)X$ should be tangent if
$[-3f_2g(Y,\phi X)\phi X+(f_1-f_3)\{g(Y,X)\phi X-g(X,X)\phi Y\}]$ is tangent.
Since $f_2(p)\neq0$, $f_1(p)=f_3(p)$ at any point $p$ then by similar way of proof of Lemma $3.2$ of \cite{ALEGRE3}, we can prove that either $M$ is invariant or anti-invariant. This proves the Lemma.
\end{proof}
\begin{remark}
let $M$ be a connected submanifold of $\bar{M}^{2n+1}(f_1,f_2,f_3)$ with
respect to $\widetilde{\bar{\nabla}}$. If $f_1(p)\neq f_3(p)$ and $TM$ is invariant
under the action of $\widetilde{\bar{R}}(X,Y)$, $X,Y\in \Gamma(TM)$, then $M$ is invariant.
\end{remark}
From Lemma $3.3$ and Lemma $3.5$, we have
\begin{theorem}
Let $M$ be a connected submanifold of $\bar{M}^{2n+1}(f_1,f_2,f_3)$ with respect
to  $\widetilde{\bar{\nabla}}$. If $f_2(p)\neq0$, $f_1(p)=f_3(p)$ then $M$ is either invariant or anti-invariant if and only if $TM$ is
invariant under the action of $\widetilde{\bar{R}}(X,Y)$ for all $X,Y\in\Gamma(TM)$.
\end{theorem}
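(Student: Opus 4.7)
The plan is to prove the biconditional by invoking the two immediately preceding lemmas, one for each direction. Both hypotheses $f_2(p)\neq 0$ and $f_1(p)=f_3(p)$ are explicitly in force, so each lemma applies without further work.

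For the forward implication ($\Rightarrow$), I would assume that $M$ is either invariant or anti-invariant. Since the condition $f_1(p)=f_3(p)$ is part of the standing hypothesis, Lemma $3.3$ applies verbatim and yields $\widetilde{\bar{R}}(X,Y)Z \in \Gamma(TM)$ for all $X,Y,Z\in\Gamma(TM)$. This is exactly the statement that $TM$ is invariant under the action of $\widetilde{\bar{R}}(X,Y)$ for all $X,Y\in\Gamma(TM)$, so no additional computation is needed. The role of $f_2(p)\neq 0$ is inert in this direction.

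For the converse ($\Leftarrow$), I would assume that $TM$ is invariant under the action of $\widetilde{\bar{R}}(X,Y)$ for all $X,Y\in\Gamma(TM)$, and that $f_2(p)\neq 0$ together with $f_1(p)=f_3(p)$. This is precisely the hypothesis of Lemma $3.5$, whose conclusion is that $M$ must be invariant or anti-invariant. One simply cites it.

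Since both directions reduce to direct applications of lemmas already established, there is essentially no technical obstacle here; the proof is a one-line assembly. If anything, the only subtlety worth flagging in the write-up is pointing out that in the converse direction the nondegeneracy assumption $f_2(p)\neq 0$ is genuinely needed (it is what makes Lemma $3.5$'s dichotomy rigid), whereas in the forward direction it plays no role. Thus a compact two-sentence proof suffices, of the form \emph{``The direct part follows from Lemma $3.3$ and the converse from Lemma $3.5$.''}
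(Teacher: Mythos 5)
Your proposal is correct and is exactly the paper's argument: the authors also obtain the theorem by citing Lemma $3.3$ for the forward implication and Lemma $3.5$ for the converse. No difference in approach worth noting.
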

\begin{proposition}
Let $M$ be a submanifold of $\bar{M}^{2n+1}(f_1,f_2,f_3)$ with respect to
$\widetilde{\bar{\nabla}}$. If $M$ is invariant, then $TM$ is invariant under
the action of $\widetilde{\bar{R}}(U,V)$ for any $U,V\in \Gamma(T^\bot M)$.
\end{proposition}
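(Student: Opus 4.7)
The plan is to substitute directly into the curvature formula (\ref{eqn1.3s}) with $(X,Y,Z)=(U,V,W)$ where $U,V\in\Gamma(T^\bot M)$ and $W\in\Gamma(TM)$, and then to check each of its four brackets separately. The ingredients I would use are three immediate consequences of $M$ being invariant combined with the standing assumption that $\xi$ is tangent to $M$: (i) $\phi W\in\Gamma(TM)$ for every $W\in\Gamma(TM)$, i.e.\ $F=0$ in (\ref{eqn2.17f}); (ii) $\phi U,\phi V\in\Gamma(T^\bot M)$, which follows from (i) together with the metric compatibility (\ref{eqn2.3}) since $\phi$ preserves orthogonality to $\xi$; and (iii) $\eta(U)=g(U,\xi)=0$ for any normal vector $U$, because $\xi\in\Gamma(TM)$.

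With these in hand I would dispose of the brackets of (\ref{eqn1.3s}) in turn. The $(f_1-1)$-bracket vanishes because $g(U,W)=g(V,W)=0$. The $(f_3-1)$-bracket vanishes because each of its four summands contains one of $\eta(U)$, $\eta(V)$, $g(U,W)$ or $g(V,W)$. The $(f_1-f_3)$-bracket vanishes for the same reasons together with $g(U,\phi W)=g(V,\phi W)=0$, which hold because $\phi W\in\Gamma(TM)$ is orthogonal to the normal vectors $U,V$. In the $f_2$-bracket the terms $g(U,\phi W)\phi V$ and $g(V,\phi W)\phi U$ vanish by the same orthogonality, leaving only $2f_2\,g(U,\phi V)\,\phi W$, which lies in $\Gamma(TM)$ because $\phi W$ does.

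Adding up, one obtains $\widetilde{\bar{R}}(U,V)W=2f_2\,g(U,\phi V)\,\phi W\in\Gamma(TM)$, which proves that $TM$ is invariant under $\widetilde{\bar{R}}(U,V)$. There is no real obstacle to overcome: the only delicate step is observation (ii), namely that invariance of $TM$ under $\phi$ forces invariance of $T^\bot M$ under $\phi$, and I would state this explicitly at the start so that the $f_2$-bracket computation proceeds transparently without the reader worrying about the ambient position of $\phi U$ and $\phi V$.
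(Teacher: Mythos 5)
Your proposal is correct and follows essentially the same route as the paper: substitute $U,V\in\Gamma(T^\bot M)$ and a tangent vector into (\ref{eqn1.3s}), annihilate every term except $2f_2\,g(U,\phi V)\,\phi W$ using $g(U,W)=g(V,W)=\eta(U)=\eta(V)=g(U,\phi W)=g(V,\phi W)=0$, and conclude tangency from $\phi W\in\Gamma(TM)$, which is exactly the paper's computation leading to its equation (3.4).
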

\begin{proof}
Replacing $X,Y,Z$ by $U,V,X$ in (\ref{eqn1.3s}), we get
\begin{eqnarray}
\label{eqn3.2}
\widetilde{\bar{R}}(U,V)X &=&(f_1-1)\big\{g(V,X)U-g(U,X)V\big\}+f_2\big\{g(U,\phi X)\phi V\\
\nonumber&-& g(V,\phi X)\phi U + 2g(U,\phi V)\phi X\big\}+(f_3-1)\big\{\eta(U)\eta(X)V\\
\nonumber&-& \eta(V)\eta(X)U+g(U,X)\eta(V)\xi - g(V,X)\eta(U)\xi\big\}\\
\nonumber&+&(f_1-f_3)\{g(\phi V,X)U-g(\phi U,X)V+g(V,X)\phi U \\
\nonumber&-&g(U,X)\phi V\}.
\end{eqnarray}
As $M$ is invariant, $U,V\in \Gamma(T^\bot M)$, we have
\begin{equation}\label{eqn3.3}
g(X,\phi U)=-g(\phi X,U)=g(\phi V,X)=0
\end{equation}
 for any $X\in \Gamma(TM)$. Using (\ref{eqn3.3}) in (\ref{eqn3.2}), we  have
 \begin{equation}\label{eqn3.4}
 \widetilde{\bar{R}}(U,V)X=2f_2g(U,\phi V)\phi X,
 \end{equation}
 which is tangent as $\phi X$ is tangent.
This proves the proposition.
\end{proof}
\begin{proposition}
Let $M$ be a connected submanifold of $\bar{M}^{2n+1}(f_1,f_2,f_3)$ with respect
to $\widetilde{\bar{\nabla}}$. If $f_2(p)\neq0$, $f_1(p)=f_3(p)$ for each $p\in M$ and $T^\bot M$ is invariant under the action
of $\widetilde{\bar{R}}(U,V)$, $U,V\in\Gamma(T^\bot M)$, then $M$ is either invariant or anti-invariant.
\end{proposition}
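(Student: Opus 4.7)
The plan is to write out $\widetilde{\bar{R}}(U,V)W$ for $U,V,W\in\Gamma(T^{\bot}M)$ using (\ref{eqn1.3s}), observe that the tangential component is concentrated in the $f_2$--block, and then extract a pointwise algebraic identity by specializing $W=U$ that splits into the invariant/anti-invariant dichotomy. Substituting $X,Y,Z\mapsto U,V,W\in T^{\bot}M$ in (\ref{eqn1.3s}), the fact that $\xi$ is tangent to $M$ gives $\eta(U)=\eta(V)=\eta(W)=0$ and kills the whole $(f_3-1)$--block, while the hypothesis $f_1(p)=f_3(p)$ kills the final $(f_1-f_3)$--block. What survives is
\[
\widetilde{\bar{R}}(U,V)W=(f_1-1)\{g(V,W)U-g(U,W)V\}+f_2\{g(U,\phi W)\phi V-g(V,\phi W)\phi U+2g(U,\phi V)\phi W\}.
\]
The $(f_1-1)$--bracket lies in $T^{\bot}M$, so the $\widetilde{\bar{R}}(U,V)$--invariance of $T^{\bot}M$ is equivalent to the tangential projection of the $f_2$--bracket being zero.

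The key algebraic step is to set $W=U$. Using $g(U,\phi U)=0$ (immediate from (\ref{eqn2.4})) together with $g(U,\phi V)=-g(V,\phi U)$, the $f_2$--bracket collapses to $-3f_2\,g(V,\phi U)\phi U$. Decomposing $\phi U=tU+fU$ with $tU\in TM$ and $fU\in T^{\bot}M$, its tangential component is $-3f_2\,g(V,\phi U)\,tU$; since $f_2(p)\neq 0$, one obtains
\[
g(V,\phi U)\,tU=0\qquad\text{for all }U,V\in T^{\bot}_pM.
\]
Because $tU\in TM$, $g(V,\phi U)=g(V,fU)$, so for each $U$ either $tU=0$ or $fU=0$; equivalently $T^{\bot}_pM=\ker t_p\cup\ker f_p$. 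A real vector space is not the union of two proper subspaces, hence $t_p=0$ on $T^{\bot}_pM$ or $f_p=0$ on $T^{\bot}_pM$.

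In the first alternative the adjoint identity $g(FX,V)=-g(X,tV)$ (which follows from (\ref{eqn2.17f}) together with (\ref{eqn2.4})) forces $F_p=0$, so $M$ is invariant at $p$; in the second alternative the argument given in the proof of Lemma $3.2$ of \cite{ALEGRE3} yields that $M$ is anti-invariant at $p$. Connectedness of $M$ then promotes the pointwise dichotomy to a global one. The main obstacle I expect is this final stage: while the pointwise "$t_p=0$" branch translates immediately into invariance via a one-line adjoint computation, converting the "$f_p=0$" branch into genuine anti-invariance of $M$ at $p$ requires importing the Alegre--Carriazo argument, and the two subsets on which $M$ is invariant (respectively anti-invariant) must be shown to be closed so that connectedness rules out switching between the two alternatives.
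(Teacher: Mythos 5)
Your route is the same as the paper's: the paper's proof of this proposition is a one--line deferral (``proceed as in the proof of the lemma on $\widetilde{\bar{R}}(X,Y)X$, now requiring $\widetilde{\bar{R}}(U,V)U$ to be normal''), and you have simply executed that deferral explicitly. Your computation is correct as far as it goes: with $U,V,W$ normal and $\eta(U)=\eta(V)=\eta(W)=0$, $f_1=f_3$, the specialization $W=U$ does collapse the $f_2$--block to $-3f_2\,g(V,\phi U)\phi U$, and $f_2(p)\neq 0$ then yields, for each $U\in T^\perp_pM$, either $tU=0$ or $fU=0$, whence $t_p\equiv 0$ or $f_p\equiv 0$ on $T^\perp_pM$ by the two--subspaces argument. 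The branch $t_p\equiv 0$ does give $F_p=0$ via $g(FX,V)=-g(X,tV)$, i.e.\ invariance at $p$.

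The step you yourself flag as the main obstacle is, however, a genuine gap, and it is not closable by ``importing the Alegre--Carriazo argument.'' Lemma 3.2 of \cite{ALEGRE3} runs entirely on the tangential operators $T,F$ and concludes $T_p=0$ or $F_p=0$, which are by definition anti-invariance and invariance of $M$ at $p$. Your second branch instead gives $f_p\equiv 0$ on $T^\perp_pM$, i.e.\ $\phi(T^\perp_pM)\subset T_pM$: this says the \emph{normal} bundle is anti-invariant, which is strictly weaker than anti-invariance of $M$ (i.e.\ $T_p=0$ on $T_pM$). The only adjoint identities available pair $F$ with $t$ and show that $T$ and $f$ are separately skew-adjoint on $TM$ and $T^\perp M$; nothing links $f_p=0$ to $T_p=0$. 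Indeed, whenever $f\equiv 0$ on the normal bundle, every coefficient $g(U,\phi W)$, $g(V,\phi W)$, $g(U,\phi V)$ in the $f_2$--block vanishes, so the hypothesis that $T^\perp M$ be $\widetilde{\bar{R}}(U,V)$--invariant is automatically satisfied and carries no further information about $T$. A real hypersurface tangent to $\xi$ (for which $\phi N\perp N$ forces $f\equiv 0$, and $\widetilde{\bar{R}}(U,V)=0$ for $U\parallel V$ makes the hypothesis vacuous) is then neither invariant nor anti-invariant when $n>1$. So your proof establishes only the dichotomy ``$M$ is invariant, or $T^\perp M$ is anti-invariant,'' not the stated conclusion; the same defect is latent in the paper's own one--line proof. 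The closedness/connectedness issue you raise at the end is comparatively minor and handled by the standard argument once the pointwise dichotomy is in the correct form.
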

\begin{proof}
The proof is similar as it is an Lemma $3.4$, just assuming that $\widetilde{\bar{R}}(U,V)U$ is normal for any $U,V\in \Gamma(T^\bot M)$.
\end{proof}
\section{Submanifolds of $\bar{M}^{2n+1}(f_1,f_2,f_3)$ with ${\bar{\nabla}}^{'}$}
\begin{lemma}
If $M$ is either invariant or anti-invarint submanifold of $\bar{M}^{2n+1}(f_1,f_2,f_3)$ with
respect to ${\bar{\nabla}}^{'}$, then
${\bar{R}}^{'}(X,Y)Z$ is tangent to $M$ and ${\bar{R}}^{'}(X,Y)V$ normal to $M$ for any $X,Y,Z\in \Gamma(TM)$ and $V\in \Gamma(T^\bot M)$.
\end{lemma}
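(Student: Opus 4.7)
The plan is to mirror the strategy of Lemma 3.1 and Lemma 3.2: substitute the relevant vectors into the curvature formula (\ref{eqn1.3t}) and classify each summand by its tangent/normal character under the invariance hypothesis. Since the statement bundles two claims (tangency of $\bar R'(X,Y)Z$ and normality of $\bar R'(X,Y)V$) together with two cases (invariant and anti-invariant), I would divide the argument into four parallel sub-analyses rather than try to treat them uniformly.

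For the tangency claim I take $X,Y,Z \in \Gamma(TM)$ and read off (\ref{eqn1.3t}) term by term. The $f_1$- and $f_3$-summands, together with the extra $\eta(Y)\eta(Z)X - \eta(X)\eta(Z)Y$ contribution, involve only $X,Y,Z$ and $\xi$, so they are tangent by the hypothesis that $\xi \in \Gamma(TM)$. The remaining pieces are the $f_2$-summand and the $(f_1-f_3)$-bracket, which feature $\phi X, \phi Y, \phi Z$ and the coefficients $g(X,\phi Z)$, $g(Y,\phi Z)$, $g(X,\phi Y)$. In the invariant case the $\phi$-images are tangent and those terms are tangent; in the anti-invariant case they are normal, so the three coefficients vanish (tangent paired with normal), eliminating both potentially obstructing summands entirely.

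For the normality claim I substitute $Z = V \in \Gamma(T^\bot M)$ into (\ref{eqn1.3t}). The identities $g(X,V) = g(Y,V) = \eta(V) = 0$ (the last from $\xi \in \Gamma(TM)$) annihilate the $f_1$-, $f_3$- and extra-$\eta$ summands. In the invariant case I would first observe that $\phi(T^\bot M) \subset T^\bot M$, which follows from $g(\phi V, X) = -g(V, \phi X) = 0$ for every tangent $X$; then $g(X, \phi V) = g(Y, \phi V) = 0$ collapses the $(f_1-f_3)$-bracket and the first two $f_2$-terms, leaving only $2 f_2\, g(X,\phi Y)\phi V$, which is normal because $\phi V$ is. In the anti-invariant case $\phi X, \phi Y$ are normal and $g(X, \phi Y) = 0$, so the three $f_2$-terms either vanish or are scalar multiples of normal vectors.

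The genuine obstacle lies in the $(f_1-f_3)$-bracket in the anti-invariant sub-case: here $\phi V$ need not itself be normal, so the coefficient $g(X, \phi V) = -g(\phi X, V)$ is an inner product of two normal vectors and is not forced to be zero by a pure tangent/normal argument. To close this gap I would decompose $\phi V = TV + FV$ via (\ref{eqn2.17f}), use (\ref{eqn2.1})--(\ref{eqn2.3}) to pin down the tangential part $TV$ under the anti-invariance $F|_{TM} = \phi|_{TM}$, and then either recover $g(X,\phi V) = g(Y, \phi V) = 0$ or exhibit the residual tangential contribution as an explicit normal correction. The other three sub-analyses are direct term-by-term checks and should present no real difficulty.
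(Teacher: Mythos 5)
Three of your four sub-analyses are correct and coincide with what the paper does: for $X,Y,Z\in\Gamma(TM)$ the $f_1$-, $f_3$- and extra-$\eta$ summands of (\ref{eqn1.3t}) are tangent because $\xi$ is, the $f_2$- and $(f_1-f_3)$-summands are tangent in the invariant case, and in the anti-invariant case their coefficients $g(X,\phi Z)=-g(\phi X,Z)$, etc., vanish; likewise your treatment of $\bar{R}'(X,Y)V$ for invariant $M$, using $\phi(T^\perp M)\subset T^\perp M$ to leave only $2f_2g(X,\phi Y)\phi V$, is sound.

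The sub-case you flag as the ``genuine obstacle'' is, however, not closable by the route you propose, because the obstruction is real rather than an artifact of the bookkeeping. For anti-invariant $M$ the coefficient $g(X,\phi V)=-g(\phi X,V)$ is an inner product of two normal vectors and genuinely does not vanish: take $X$ a unit tangent vector orthogonal to $\xi$ and $V=\phi X\in\Gamma(T^\perp M)$; then $g(X,\phi V)=g(X,\phi^2X)=-1$. With $Y$ tangent, orthogonal to $X$ and to $\xi$, the summand $(f_1-f_3)\{g(X,\phi V)Y-g(Y,\phi V)X\}$ of (\ref{eqn1.3t}) equals $-(f_1-f_3)Y$, a nonzero \emph{tangent} vector whenever $f_1\neq f_3$, while all the surviving $f_2$-terms are normal. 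So $\bar{R}'(X,Y)V$ has a nonzero tangential component, and no decomposition of $\phi V$ into tangential and normal parts will convert $(f_1-f_3)\{g(X,\phi V)Y-g(Y,\phi V)X\}$ into a normal vector --- its two constituents $X$ and $Y$ are tangent by hypothesis. The normality assertion for anti-invariant $M$ therefore holds only where $f_1=f_3$ (or under some hypothesis killing these coefficients). You should be aware that the paper's own proof of this lemma simply declares $\bar{R}'(X,Y)V=0$ in the anti-invariant case, which silently discards exactly this term; what you have located is not a gap in your argument so much as an error in the statement being proved, and the honest conclusion of your analysis is that the anti-invariant half of the normality claim needs the additional hypothesis $f_1=f_3$.
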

\begin{proof}
If $M$ is invariant then from (\ref{eqn1.3t}) we say that ${\bar{R}}^{'}(X,Y)Z$ is
tangent to $M$ because $\phi X$ and $\phi Y$ are tangent to $M$. \\
If $M$ is anti-invariant then
\begin{equation}\label{eqn4.1}
g(X,\phi Z)=g(Y,\phi Z)=g(\phi X,Z)=g(\phi Y,Z)=0.
\end{equation}
From (\ref{eqn1.3t}) and (\ref{eqn4.1}) we have
\begin{eqnarray}
 \label{eqn4.2}
{\bar{R}}^{'}(X,Y)Z &=&f_1\big\{g(Y,Z)X-g(X,Z)Y\big\}+f_3\big\{\eta(X)\eta(Z)Y \\
\nonumber&-& \eta(Y)\eta(Z)X+g(X,Z)\eta(Y)\xi - g(Y,Z)\eta(X)\xi\big\}\\
\nonumber&+&[\eta(Y)\eta(Z)X-\eta(X)\eta(Z)Y],
\end{eqnarray}
which is tangent. \\
If $M$ is invariant then from (\ref{eqn1.3t}), it follows that ${\bar{R}}^{'}(X,Y)V$ is normal to $M$, and if $M$ is anti-invariant then ${\bar{R}}^{'}(X,Y)V=0$ i.e. ${\bar{R}}^{'}(X,Y)V$ is normal to $M$ for any $X,Y\in \Gamma(TM)$ and $V\in \Gamma(T^\bot M)$.
This proves the Lemma.
\end{proof}
\begin{lemma}
Let $M$ be a connected submanifold of $\bar{M}^{2n+1}(f_1,f_2,f_3)$ with respect to ${\bar{\nabla}}^{'}$. If $f_2(p)\neq 0$ for each $p\in M$ and $TM$ is invariant
under the action of $\bar{R}^{'}(X,Y)$, $X,Y\in \Gamma(TM)$,
then $M$ is either invariant or anti-invariant.
\end{lemma}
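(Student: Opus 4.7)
The plan is to mirror the proof of Lemma~3.5, noting that in the expression (\ref{eqn1.3t}) the vector $\phi$ appears only in the $f_2$ block. First I would set $Z=X$ in (\ref{eqn1.3t}) and use $g(X,\phi X)=0$ together with $g(X,\phi Y)=-g(Y,\phi X)$ to collapse the three $f_2$-terms into a single term $-3f_2\,g(Y,\phi X)\phi X$. Every other summand (those with coefficient $f_1$, $f_3$, $f_1-f_3$, or coming from the extra $\eta\eta$-piece) is a linear combination of $X$, $Y$, $\xi$, all of which lie in $\Gamma(TM)$. Hence
\[
\bar{R}^{'}(X,Y)X \;\equiv\; -3f_2\, g(Y,\phi X)\, \phi X \pmod{\Gamma(TM)}.
\]

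Next, I would apply the decomposition $\phi X=TX+FX$ from (\ref{eqn2.17f}) to isolate the normal part of $\bar{R}^{'}(X,Y)X$ as $-3f_2\,g(Y,\phi X)\,FX$. The hypothesis that $TM$ is invariant under $\bar{R}^{'}(X,Y)$ forces this normal part to vanish for every $X,Y\in\Gamma(TM)$, and since $f_2(p)\neq 0$ by assumption one obtains $g(Y,\phi X)\,FX=0$ for all tangent $Y$. Thus at each $p\in M$ and for each $X\in T_pM$, either $FX=0$, or else $g(Y,\phi X)=0$ for every tangent $Y$, which is equivalent to $\phi X\in \Gamma(T^\bot M)$, i.e.\ $TX=0$. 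Pointwise this reads $T_pM=\ker F|_{T_pM}\cup \ker T|_{T_pM}$, and since a vector space cannot be the union of two proper subspaces, at each point $p$ either $F|_{T_pM}=0$ (invariant case) or $T|_{T_pM}=0$ (anti-invariant case).

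The main obstacle is the final, global step: upgrading this pointwise dichotomy to a uniform one on $M$. This is precisely the connectedness argument that the authors defer to Lemma~3.2 of \cite{ALEGRE3} and reuse in Lemma~3.5. One defines the two closed subsets $\{p\in M: F|_{T_pM}=0\}$ and $\{p\in M: T|_{T_pM}=0\}$; by continuity of $T$ and $F$ they are closed, by the preceding dichotomy they cover the connected manifold $M$, and a standard clopen/interior argument rules out a nontrivial decomposition, forcing $M$ to be entirely invariant or entirely anti-invariant.
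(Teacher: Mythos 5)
Your proposal is correct and follows essentially the same route as the paper: set $Z=X$ in (\ref{eqn1.3t}), observe that all terms except $-3f_2\,g(Y,\phi X)\phi X$ are automatically tangent, and then invoke the pointwise dichotomy plus connectedness argument of Lemma~3.2 of \cite{ALEGRE3}. You merely spell out the details of that cited lemma (the $T$/$F$ decomposition, the union-of-two-subspaces argument, and the closed-cover step) which the paper leaves to the reference.
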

\begin{proof}
For $X,Y\in \Gamma(TM)$, we have from (\ref{eqn1.3t}) that
\begin{eqnarray}
\label{eqn4.3}
  {\bar{R}}^{'}(X,Y)X &=&f_1\big\{g(Y,X)X-g(X,X)Y\big\}+f_2\big\{g(X,\phi X)\phi Y\\
\nonumber&-&g(Y,\phi X)\phi X + 2g(X,\phi Y)\phi X\big\}+f_3\big\{\eta(X)\eta(X)Y\\
\nonumber&-&\eta(Y)\eta(X)X+g(X,X)\eta(Y)\xi - g(Y,X)\eta(X)\xi\big\}\\
\nonumber&-&(f_1-f_3)g(\phi X, Y) X+\{\eta(Y)\eta(Z)X-\eta(X)\eta(Z)Y\}.
\end{eqnarray}
Note that  ${\bar{R}}^{'}(X,Y)X$ should be tangent if $3f_2(p)g(Y,\phi X)\phi X$ is tangent.
Since $f_2(p)\neq0$ for each $p\in M$, as similar as proof of Lemma $3.2$ of \cite{ALEGRE3},
we may conclude that either $M$ is invariant or anti-invariant. This proves the Lemma.
\end{proof}
From Lemma $4.1$ and Lemma $4.2$, we have
\begin{theorem}
Let $M$ be a connected submanifold of $\bar{M}^{2n+1}(f_1,f_2,f_3)$ with respect
to ${\bar{\nabla}}^{'}$. If $f_2(p)\neq0$ for each $p\in M$, then $M$ is
either invariant or anti-invariant if and only if $TM$ is invariant under the
action of ${\bar{R}}^{'}(X,Y)$ for all $X,Y\in\Gamma(TM)$.
\end{theorem}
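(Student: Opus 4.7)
The plan is to observe that this theorem is a straightforward biconditional packaging of the two preceding lemmas, so the proof will proceed by addressing each implication separately and appealing to the appropriate lemma in each case.

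For the forward direction, assuming $M$ is either invariant or anti-invariant, I would invoke Lemma $4.1$ directly. That lemma guarantees that ${\bar{R}}^{'}(X,Y)Z$ is tangent to $M$ for any $X,Y,Z\in\Gamma(TM)$ in either case, which is exactly the statement that $TM$ is invariant under the action of ${\bar{R}}^{'}(X,Y)$. Note that the hypothesis $f_2(p)\neq 0$ is not needed for this direction; both lemma cases in Lemma $4.1$ (invariant via the fact that $\phi$ preserves $TM$, and anti-invariant via the vanishing of the $f_2$-terms shown in equation (\ref{eqn4.2})) give tangency unconditionally on $f_2$.

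For the converse direction, I would assume that $f_2(p)\neq 0$ for each $p\in M$ and that $TM$ is invariant under ${\bar{R}}^{'}(X,Y)$ for all $X,Y\in\Gamma(TM)$. This is precisely the hypothesis of Lemma $4.2$, whose conclusion is exactly that $M$ is either invariant or anti-invariant. Here the condition $f_2(p)\neq 0$ is essential, since Lemma $4.2$ exploits it (via the $3f_2(p)g(Y,\phi X)\phi X$ term arising in (\ref{eqn4.3})) to force the tangential/normal dichotomy on $\phi X$.

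Combining the two implications yields the theorem. There is no genuine obstacle here because the technical content was already absorbed into the two lemmas; the only point worth flagging in the write-up is that the two hypotheses are deployed asymmetrically (one direction uses no assumption on $f_2$, the other crucially does), so the theorem is essentially stated under the stronger joint hypothesis needed for the harder converse.
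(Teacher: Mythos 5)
Your proposal is correct and follows exactly the paper's own route: the authors derive this theorem by simply combining Lemma 4.1 (for the forward implication) with Lemma 4.2 (for the converse), which is precisely your two-step argument. Your added remark that the hypothesis $f_2(p)\neq 0$ is only needed for the converse direction is accurate and a fair observation, but does not change the substance.
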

\begin{proposition}
Let $M$ be a submanifold of $\bar{M}^{2n+1}(f_1,f_2,f_3)$ with respect to
${\bar{\nabla}}^{'}$. If $M$ is invariant, then $TM$ is invariant
under the action of ${\bar{R}}^{'}(U,V)$ for any $U,V\in \Gamma(T^\bot M)$.
\end{proposition}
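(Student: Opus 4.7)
The plan is to mimic the proof of Proposition~3.1 almost verbatim, since the curvature expression (\ref{eqn1.3t}) for ${\bar\nabla}'$ differs from (\ref{eqn1.3s}) only by a few $\eta$ and $(f_1-f_3)$ correction terms that will vanish under the invariance hypothesis. First, I would substitute $X,Y,Z \mapsto U,V,X$ (with $U,V\in\Gamma(T^\perp M)$ and $X\in\Gamma(TM)$) into (\ref{eqn1.3t}), yielding an expression for ${\bar R}'(U,V)X$ term by term.

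Next, I would extract the vanishing conditions from the invariance hypothesis. Because $M$ is invariant, $\phi X\in\Gamma(TM)$, so for normal vectors $U,V$ one has
\begin{equation*}
g(U,X)=g(V,X)=g(U,\phi X)=g(V,\phi X)=0.
\end{equation*}
Since $\xi$ is tangent to $M$, we also obtain $\eta(U)=g(U,\xi)=0$ and $\eta(V)=0$. These identities annihilate the $f_1$-bracket, the $f_3$-bracket, the $(f_1-f_3)$-bracket, and the extra $\eta$-terms in (\ref{eqn1.3t}), leaving only the piece
\begin{equation*}
{\bar R}'(U,V)X = 2 f_2\, g(U,\phi V)\,\phi X.
\end{equation*}

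Finally, since $\phi X\in \Gamma(TM)$ by invariance of $M$, this vector is tangent to $M$, which proves that $TM$ is invariant under the action of ${\bar R}'(U,V)$ for all $U,V\in\Gamma(T^\perp M)$.

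There is no real obstacle here; the argument is a direct substitution-and-cancellation, completely parallel to Proposition~3.1. The only mild subtlety is confirming that every additional correction term carried by ${\bar R}'$ relative to $\bar R$ (namely the $(f_1-f_3)[g(X,\phi Z)Y-g(Y,\phi Z)X]$ and the $\eta(Y)\eta(Z)X-\eta(X)\eta(Z)Y$ terms) is killed by the same invariance relations, which it is once one observes $\eta(U)=\eta(V)=0$ and $g(U,\phi X)=g(V,\phi X)=0$.
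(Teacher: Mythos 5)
Your proposal is correct and follows essentially the same route as the paper: substitute $U,V,X$ into (\ref{eqn1.3t}), use the invariance of $M$ (together with $\xi$ tangent and $U,V$ normal) to kill every term except $2f_2\,g(U,\phi V)\phi X$, and conclude tangency from $\phi X\in\Gamma(TM)$. Your explicit listing of $\eta(U)=\eta(V)=0$ and $g(U,X)=g(V,X)=0$ is a slightly fuller bookkeeping than the paper's (\ref{eqn4.5}), but the argument is the same.
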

\begin{proof}
Replacing $X,Y,Z$ by $U,V,X$ in (\ref{eqn1.3t}), we get
\begin{eqnarray}
\label{eqn4.4}
{\bar{R}}^{'}(U,V)X &=&f_1\big\{g(V,X)U-g(U,X)V\big\} +f_2\big\{g(U,\phi X)\phi V\\
\nonumber&-& g(V,\phi X)\phi U + 2g(U,\phi V)\phi X\big\}+f_3\big\{\eta(U)\eta(X)V\\
\nonumber&-& \eta(V)\eta(X)U+g(U,X)\eta(V)\xi - g(V,X)\eta(U)\xi\big\}\\
\nonumber&+&(f_1-f_3)\{g( U,\phi X) V-g( V,\phi X)U\}\\
\nonumber&+&\{\eta(V)\eta(X)U-\eta(U)\eta(X)V\}.
\end{eqnarray}
As $M$ is invariant, $U\in \Gamma(T^\bot M)$, we have
\begin{equation}\label{eqn4.5}
g(X,\phi U)=-g(\phi X,U)=g(\phi V,X)=0
\end{equation}
 for any $X\in \Gamma(TM)$. Using (\ref{eqn4.5}) in (\ref{eqn4.4}), we  have
 \begin{equation}\label{eqn4.6}
 {\bar{R}}^{'}(U,V)X=2f_2g(U,\phi V)\phi X,
 \end{equation}
 which is tangent as $\phi X$ is tangent. This proves the proposition.
\end{proof}
\begin{proposition}
Let $M$ be a connected submanifold of $\bar{M}^{2n+1}(f_1,f_2,f_3)$ with respect to
${\bar{\nabla}}^{'}$. If $f_2(p)\neq0$ for each $p\in M$ and
$T^\bot M$ is invariant under the action of ${\bar{R}}(U,V)$, $U,V\in\Gamma(TM)$,
then $M$ is either invariant or anti-invariant.
\end{proposition}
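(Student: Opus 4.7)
The plan is to substitute a normal vector into the curvature formula (\ref{eqn1.3t}) for $\bar R'$, split the outcome into tangential and normal components, and use the hypothesis that the tangential component vanishes to force an algebraic dichotomy on the tensor $T$.

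First, I would apply (\ref{eqn1.3t}) with $Z = W \in \Gamma(T^\perp M)$ and $X,Y \in \Gamma(TM)$. Since $g(X,W) = g(Y,W) = \eta(W) = 0$, many terms drop out and the expression collapses to
\[
\bar{R}'(X,Y)W = f_2\{g(X,\phi W)\phi Y - g(Y,\phi W)\phi X + 2g(X,\phi Y)\phi W\} + (f_1-f_3)\{g(X,\phi W)Y - g(Y,\phi W)X\}.
\]
Using (\ref{eqn2.17f}) and the companion decomposition $\phi W = tW + fW$ with $tW \in \Gamma(TM)$ and $fW \in \Gamma(T^\perp M)$, the hypothesis forces the tangential component of the right-hand side to vanish for all admissible $X, Y, W$; call this identity $(\star)$.

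Next I would work pointwise at $p \in M$ and substitute $X = \xi$ into $(\star)$. Because $\phi \xi = 0$ and $g(\xi, \phi Y) = -g(\phi \xi, Y) = 0$, all terms collapse to $-(f_1-f_3)g(Y, \phi W)\xi = 0$, yielding $(f_1(p) - f_3(p))g(Y, \phi W)_p = 0$. If $f_1(p) \neq f_3(p)$, this forces $g(Y, \phi W)_p = 0$ for all $Y \in T_pM$ and $W \in T_p^\perp M$; by (\ref{eqn2.4}) this is equivalent to $g(FY, W)_p = 0$, hence $F \equiv 0$ at $p$ and $M$ is invariant there. At points where $f_1(p) = f_3(p)$, after cancelling $f_2(p) \neq 0$ and writing $v := tW \in T_pM$, the identity $(\star)$ reduces to
\[
g(X,v)TY - g(Y,v)TX + 2g(X,TY)v = 0 \qquad (X,Y \in T_pM,\ v \in \mathrm{Im}(t_p)).
\]
If $\mathrm{Im}(t_p) = \{0\}$, then $\phi(T_p^\perp M) \subset T_p^\perp M$ and, by the same adjoint argument, $M$ is invariant at $p$. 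Otherwise pick $0 \neq v \in \mathrm{Im}(t_p)$, choose $Y \in T_pM$ with $g(Y,v) = 0$, and set $X = TY$; the displayed identity becomes $g(TY,v)TY + 2|TY|^2 v = 0$, which is incompatible with $TY \neq 0$ (a linear dependence $TY = \lambda v$ would force $3\lambda^2 g(v,v) v = 0$). Hence $TY = 0$ on $v^\perp \cap T_pM$; the residual equations combined with the skew-symmetry of $T$ (from (\ref{eqn2.4})) then force $Tv = 0$ as well, so $T \equiv 0$ at $p$ and $M$ is anti-invariant there. A standard connectedness argument, analogous to Lemma $3.2$ of \cite{ALEGRE3}, promotes the pointwise dichotomy to a global one.

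The main obstacle will be the case $f_1(p) = f_3(p)$: the substitution $X = \xi$ is vacuous, so one must extract the dichotomy purely from the algebraic structure of $(\star)$. The key technical step is the choice $Y \perp v$ followed by $X = TY$, producing a Pythagorean-type identity incompatible with nonzero $TY$; this, combined with the skew-symmetry of $T$, is what gives $T \equiv 0$ rather than merely a rank bound.
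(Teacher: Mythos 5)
Your proof is correct, but it follows a genuinely different route from the paper's, and in fact it proves the hypothesis as literally written rather than the one the paper's own proof uses. The paper disposes of this proposition in one line: repeat the computation of Lemma 4.2 with \emph{both} arguments normal, i.e.\ expand $\bar{R}^{'}(U,V)U$ for $U,V\in\Gamma(T^{\perp}M)$ (the ``$U,V\in\Gamma(TM)$'' in the statement is a typo; compare Propositions 3.2, 5.2 and 6.2), observe that every term except $-3f_2\,g(V,\phi U)\phi U$ is automatically normal --- in particular the extra term $-(f_1-f_3)g(V,\phi U)U$ is normal because $U$ is --- and then invoke Lemma 3.2 of \cite{ALEGRE3}. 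You instead read the hypothesis verbatim and test $\bar{R}^{'}(X,Y)W$ with $X,Y$ tangent and $W$ normal; your collapsed formula from (\ref{eqn1.3t}) is right, and the ensuing analysis is sound: in your setup the $(f_1-f_3)$-term is tangential and must be killed separately, which the substitution $X=\xi$ does, settling the case $f_1(p)\neq f_3(p)$ outright, while in the case $f_1(p)=f_3(p)$ the choice $Y\perp v$, $X=TY$ together with the skew-symmetry of $T$ correctly forces $T_p=0$ whenever $t_p\neq 0$. What your route buys is a self-contained pointwise dichotomy (the paper outsources all of the linear algebra to \cite{ALEGRE3}) and an explicit treatment of the $(f_1-f_3)$ contribution, which simply does not arise in the paper's normal--normal--normal substitution. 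What it costs is a mismatch of hypotheses: strictly speaking you have proved the version with $U,V$ tangent, so to match the paper's intended statement you should either flag the discrepancy or rerun the argument with $U,V$ normal, where the same $-3f_2\,g(V,\phi U)$ coefficient appears and your computation becomes, if anything, shorter.
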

\begin{proof}
  The proof is similar as the proof of Lemma $4.2$, just imposing that ${\bar{R}}^{'}(U,V)U$ is normal for any $U,V\in \Gamma(TM)$.
\end{proof}
\section{Submanifolds of $\bar{M}^{2n+1}(f_1,f_2,f_3)$ with $\hat{\bar{\nabla}}$}
\begin{lemma}
If $M$ is either invariant or anti-invarint submanifold of $\bar{M}^{2n+1}(f_1,f_2,f_3)$ with
respect to $\hat{\bar{\nabla}}$, then
$\hat{\bar{R}}(X,Y)Z$ is tangent to $M$ and $\hat{\bar{R}}(X,Y)V$ is
normal to $M$ for any $X,Y,Z\in \Gamma(TM)$ and $V\in \Gamma(T^\bot M)$.
\end{lemma}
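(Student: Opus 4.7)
The proof will proceed by direct case analysis on the curvature formula (\ref{eqn1.3v}), treating the invariant and anti-invariant hypotheses separately, exactly in the spirit of Lemma~4.1. Structurally, $\hat{\bar{R}}$ in (\ref{eqn1.3v}) has four building blocks: the $f_1$-block in $g(Y,Z)X-g(X,Z)Y$, an $f_2$-block whose outputs lie in the $\phi$-direction, an $\{f_3+(f_1-f_3)^2\}$-block driven by $\eta$ and $\xi$, and an extra $(f_1-f_3)^2$-block whose outputs also lie in the $\phi$-direction. Since $\xi$ is tangent to $M$, every occurrence of $\xi$ and of $\eta$ on a tangent vector behaves well in both cases, so it is only the $\phi$-dependent blocks that require care.

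For $\hat{\bar{R}}(X,Y)Z$ with $X,Y,Z\in\Gamma(TM)$: in the invariant case $\phi(TM)\subset TM$, hence $\phi X,\phi Y,\phi Z$ are tangent and each of the four blocks is a tangent vector. In the anti-invariant case $\phi(TM)\subset T^{\perp}M$, so $\phi X,\phi Y,\phi Z$ are normal; this forces $g(X,\phi Z)=g(Y,\phi Z)=g(X,\phi Y)=0$, which wipes out both the $f_2$-block and the $(f_1-f_3)^2$-block entirely, leaving only the manifestly tangent $f_1$- and $\{f_3+(f_1-f_3)^2\}$-blocks.

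For $\hat{\bar{R}}(X,Y)V$ with $V\in\Gamma(T^{\perp}M)$: replace $Z$ by $V$ in (\ref{eqn1.3v}) and use the orthogonality relations $g(X,V)=g(Y,V)=0$ and $\eta(V)=g(V,\xi)=0$ (since $\xi\in\Gamma(TM)$) to kill the entire $f_1$-block and the $\{f_3+(f_1-f_3)^2\}$-block in both cases. In the invariant case, $\phi V$ is normal, so also $g(X,\phi V)=g(Y,\phi V)=0$, and what survives is just $2f_2 g(X,\phi Y)\phi V$, which is normal because $\phi V$ is normal.

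The one step that is not immediate and will form the main obstacle is the anti-invariant subcase of $\hat{\bar{R}}(X,Y)V$, because $\phi V$ is \emph{not} controlled by the anti-invariance hypothesis (anti-invariance restricts $\phi$ only on $TM$). The trick is to bypass any need to decompose $\phi V$: even though $g(X,\phi V)$ and $g(Y,\phi V)$ may be nonzero, the surviving contributions from the $f_2$- and $(f_1-f_3)^2$-blocks are scalar multiples of $\phi X$ and $\phi Y$ only (the $2g(X,\phi Y)\phi V$ summand vanishes because $\phi Y\in\Gamma(T^{\perp}M)$ is $g$-orthogonal to $X$), and $\phi X,\phi Y$ are normal by anti-invariance. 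This yields a normal vector and completes the proof.
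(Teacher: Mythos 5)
Your proof is correct and follows essentially the same route as the paper: a direct case analysis on the curvature formula (\ref{eqn1.3v}), using that $\phi$ preserves $TM$ in the invariant case and maps $TM$ into $T^\bot M$ in the anti-invariant case. In fact your handling of the anti-invariant subcase of $\hat{\bar{R}}(X,Y)V$ is more accurate than the paper's: the paper asserts $\hat{\bar{R}}(X,Y)V=0$ there, but the surviving terms $\{f_2+(f_1-f_3)^2\}\{g(X,\phi V)\phi Y-g(Y,\phi V)\phi X\}$ need not vanish, since $g(\phi X,V)$ can be nonzero when both $\phi X$ and $V$ lie in $\Gamma(T^\bot M)$. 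Your observation that these survivors are combinations of the normal vectors $\phi X,\phi Y$ (together with $g(X,\phi Y)=0$ killing the $\phi V$ term) is exactly what is needed to conclude normality, and it quietly repairs this small slip in the paper's own argument.
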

\begin{proof}
If $M$ is invariant then from (\ref{eqn1.3v}) we say that $\hat{\bar{R}}(X,Y)Z$ is
tangent to $M$ because $\phi X$ and $\phi Y$ are tangent to $M$.

\noindent If $M$ is anti-invariant then
\begin{equation}\label{eqn6.5.1}
g(X,\phi Z)=g(Y,\phi Z)=g(\phi X,Z)=g(\phi Y,Z)=0.
\end{equation}
From (\ref{eqn1.3v}) and (\ref{eqn6.5.1}) we have
\begin{eqnarray}
 \label{eqn6.5.2}
\hat{\bar{R}}(X,Y)Z &=&f_1\big\{g(Y,Z)X-g(X,Z)Y\big\}\\
\nonumber&+&\{f_3+(f_1-f_3)^2\}\big\{\eta(X)\eta(Z)Y-\eta(Y)\eta(Z)X \\
\nonumber&+& g(X,Z)\eta(Y)\xi - g(Y,Z)\eta(X)\xi\big\},
\end{eqnarray}
which is tangent.\\
If $M$ is invariant from (\ref{eqn1.3v}) we have $\hat{\bar{R}}(X,Y)V$ is normal to $M$, and if $M$ is anti-invariant then $\hat{\bar{R}}(X,Y)V=0$ i.e. $\hat{\bar{R}}(X,Y)V$ is normal to $M$ for any $X,Y\in \Gamma(TM)$ and $V\in \Gamma(T^\bot M)$.
 This proves the Lemma.
\end{proof}
\begin{lemma}
let $M$ be a connected submanifold of $\bar{M}^{2n+1}(f_1,f_2,f_3)$ with respect to $\hat{\bar{\nabla}}$.
If $3f_2\neq(f_1-f_3)^2$ on $M$ and $TM$ is invariant
under the action of $\hat{\bar{R}}(X,Y)$, $X,Y\in \Gamma(TM)$,
then $M$ is either invariant or anti-invariant.
\end{lemma}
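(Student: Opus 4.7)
The plan is to follow the template already used in Lemma~3.5 and Lemma~4.2: substitute $Z=X$ in the curvature formula (\ref{eqn1.3v}) and isolate the component of $\hat{\bar{R}}(X,Y)X$ that is not a priori tangent to $M$, then invoke the hypothesis that $TM$ is $\hat{\bar{R}}(X,Y)$-invariant to force that component to lie in $TM$.

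First, with $Z=X$ the $f_1$ block and the $\{f_3+(f_1-f_3)^2\}$ block of (\ref{eqn1.3v}) produce only linear combinations of $X$, $Y$ and $\xi$, all tangent to $M$. Exploiting $g(X,\phi X)=0$ (from (\ref{eqn2.4})) together with $g(X,\phi Y)=-g(Y,\phi X)$, the $f_2$ block collapses to $-3f_2\, g(Y,\phi X)\,\phi X$, and the additional $(f_1-f_3)^2$ block collapses to a further scalar multiple of $g(Y,\phi X)\,\phi X$. Hence the entire non-tangent part of $\hat{\bar{R}}(X,Y)X$ is a single scalar multiple of $g(Y,\phi X)\,\phi X$, with coefficient built from $3f_2$ and $(f_1-f_3)^2$.

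The hypothesis that $TM$ is invariant under $\hat{\bar{R}}(X,Y)$ then forces this scalar multiple of $g(Y,\phi X)\,\phi X$ to be tangent for every $X,Y\in\Gamma(TM)$, and the assumption $3f_2\neq(f_1-f_3)^2$ rules out the degenerate case in which the combined coefficient vanishes pointwise on $M$; thus $g(Y,\phi X)\,\phi X$ itself is tangent to $M$ for all $X,Y\in\Gamma(TM)$. From this point the proof closes verbatim as in Lemma~3.5, by invoking Lemma~3.2 of \cite{ALEGRE3}: writing $\phi X = TX+FX$, one concludes pointwise that either $FX_p=0$ for every $X\in T_pM$ or $TX_p=0$ for every $X\in T_pM$, and connectedness of $M$ promotes this pointwise alternative to the global dichotomy that $M$ is invariant or anti-invariant. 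The main obstacle is purely arithmetic: one must carefully collect the $f_2$ and the $(f_1-f_3)^2$ contributions and check that the resulting scalar coefficient of $g(Y,\phi X)\,\phi X$ is nonvanishing precisely under the stated hypothesis; once that step is done, the rest of the proof is a direct transcription of the arguments already established in Lemma~3.5, Lemma~4.2, and in \cite{ALEGRE3}.
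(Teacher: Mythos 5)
Your approach is exactly the paper's: set $Z=X$ in (\ref{eqn1.3v}), use $g(X,\phi X)=0$ and $g(X,\phi Y)=-g(Y,\phi X)$ to reduce the part of $\hat{\bar{R}}(X,Y)X$ that is not manifestly tangent to a single scalar multiple of $g(Y,\phi X)\phi X$, and then invoke Lemma $3.2$ of \cite{ALEGRE3} together with connectedness to get the invariant/anti-invariant dichotomy. That is precisely how the paper argues.

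There is, however, one point at which the step you defer as ``purely arithmetic'' does not close as you claim. The $f_2$ block contributes $-3f_2\,g(Y,\phi X)\phi X$ and the extra block contributes $-(f_1-f_3)^2\,g(Y,\phi X)\phi X$, so the combined coefficient is $-\{3f_2+(f_1-f_3)^2\}$ and the degenerate case is $3f_2=-(f_1-f_3)^2$, not $3f_2=(f_1-f_3)^2$. Your assertion that the stated hypothesis $3f_2\neq(f_1-f_3)^2$ rules out the vanishing of this coefficient is therefore false: for instance $f_2=-\tfrac{1}{3}$ and $(f_1-f_3)^2=1$ satisfy the stated inequality yet make $3f_2+(f_1-f_3)^2=0$, so the argument yields no information about $\phi X$ there. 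The root cause is a sign typo in the lemma's statement itself --- the paper's own proof, as well as Theorem $5.1$ and Proposition $5.2$, all use the condition $3f_2\neq-(f_1-f_3)^2$ --- so your proof is correct once the hypothesis is read with the minus sign, but the verification you promise (``nonvanishing precisely under the stated hypothesis'') does not go through for the hypothesis as literally printed, and this should be flagged rather than asserted.
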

\begin{proof}
For $X,Y\in \Gamma(TM)$, we have from (\ref{eqn1.3v}) that
\begin{eqnarray}
\label{eqn6.5.3}
  \hat{\bar{R}}(X,Y)X &=&f_1\big\{g(Y,X)X-g(X,X)Y\big\}+f_2\big\{g(X,\phi X)\phi Y\\
\nonumber&-& g(Y,\phi X)\phi X + 2g(X,\phi Y)\phi X\big\}\\
\nonumber&+&\{f_3+(f_1-f_3)^2\}\big\{\eta(X)\eta(X)Y-\eta(Y)\eta(X)X\\
\nonumber&+& g(X,X)\eta(Y)\xi-g(Y,X)\eta(X)\xi\big\}\\
\nonumber&+&(f_1-f_3)^2\big\{g(X,\phi X)\phi Y- g(Y,\phi X)\phi X\big\}.
\end{eqnarray}
Now, we see that  $\hat{\bar{R}}(X,Y)X$ should be tangent if $\{3f_2+(f_1-f_3)^2\}g(Y,\phi X)\phi X$ is tangent.
Since $3f_2\neq-(f_1-f_3)^2$ then in similar way of proof of Lemma $3.2$ of \cite{ALEGRE3} we may conclude that either $M$ is invariant or anti-invariant. This proves the Lemma.
\end{proof}
From Lemma $5.1$ and Lemma $5.2$, we can state the following:
\begin{theorem}
Let $M$ be a connected submanifold of $\bar{M}^{2n+1}(f_1,f_2,f_3)$ with respect
to $\hat{\bar{\nabla}}$. If $3f_2\neq-(f_1-f_3)^2$, then $M$ is
either invariant or anti-invariant if and only if $TM$ is invariant under the
action of $\hat{\bar{R}}(X,Y)$ for all $X,Y\in\Gamma(TM)$.
\end{theorem}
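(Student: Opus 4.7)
The plan is simply to package the two directions of the biconditional as a consequence of the preceding Lemma $5.1$ and Lemma $5.2$, since both directions have already been prepared.

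For the ``only if'' direction, I would assume that $M$ is either invariant or anti-invariant. By Lemma $5.1$, $\hat{\bar{R}}(X,Y)Z \in \Gamma(TM)$ for all $X,Y,Z \in \Gamma(TM)$ in both cases, which is exactly the statement that $TM$ is invariant under the action of $\hat{\bar{R}}(X,Y)$ for all $X,Y \in \Gamma(TM)$. So this direction requires no calculation beyond a citation of Lemma $5.1$.

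For the ``if'' direction, I would assume $3f_2 \neq -(f_1-f_3)^2$ throughout $M$, together with the hypothesis that $TM$ is invariant under $\hat{\bar{R}}(X,Y)$ for all $X,Y\in\Gamma(TM)$. Then Lemma $5.2$ directly yields that $M$ is either invariant or anti-invariant, since that lemma is stated under exactly these two hypotheses. Connectedness of $M$ is already built into the statement of Lemma $5.2$, so it carries over for free.

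There is essentially no hard step here: the only substantive work was done in proving Lemma $5.2$, where the coefficient $\{3f_2+(f_1-f_3)^2\}$ of $g(Y,\phi X)\phi X$ in the expansion of $\hat{\bar{R}}(X,Y)X$ was isolated and the non-vanishing condition $3f_2 \neq -(f_1-f_3)^2$ was used to mimic the argument of Lemma $3.2$ of \cite{ALEGRE3}. The only thing to watch out for in writing the present proof is to make the logical combination explicit, rather than just citing the two lemmas in a single line, so that it is clear which hypothesis (the algebraic condition on $f_1,f_2,f_3$) is needed only for the ``if'' implication and which (invariant or anti-invariant) is needed only for the ``only if'' implication.
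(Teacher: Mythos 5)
Your proposal is correct and is exactly the paper's argument: the theorem is stated as an immediate consequence of Lemma $5.1$ (for the ``only if'' direction) and Lemma $5.2$ (for the ``if'' direction), with no further computation. You have also used the sign $3f_2\neq-(f_1-f_3)^2$ consistently with the coefficient $\{3f_2+(f_1-f_3)^2\}$ isolated in the proof of Lemma $5.2$, which is the correct reading despite the misprinted hypothesis in that lemma's statement.
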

\begin{proposition}
Let $M$ be a submanifold of $\bar{M}^{2n+1}(f_1,f_2,f_3)$ with respect to
$\hat{\bar{\nabla}}$. If $M$ is invariant, then $TM$ is invariant
under the action of $\hat{\bar{R}}(U,V)$ for any $U,V\in \Gamma(T^\bot M)$.
\end{proposition}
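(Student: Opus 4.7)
The plan is to mirror the arguments used in Proposition $3.1$ and Proposition $4.1$: substitute $X\to U$, $Y\to V$, $Z\to X$ in the curvature expression (\ref{eqn1.3v}) with $U,V\in\Gamma(T^\bot M)$ and $X\in\Gamma(TM)$, then use the hypothesis that $M$ is invariant to eliminate most of the terms and verify that what survives is tangent.

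First I would record the vanishings that follow from invariance of $M$ (recall $\xi\in\Gamma(TM)$). Since $U,V\in\Gamma(T^\bot M)$ and $X\in\Gamma(TM)$, we immediately get $g(U,X)=g(V,X)=0$ and $\eta(U)=g(U,\xi)=0$, $\eta(V)=0$. Because $M$ is invariant, $\phi X\in\Gamma(TM)$, and so $g(\phi X,U)=g(\phi X,V)=0$; by (\ref{eqn2.4}) this gives $g(X,\phi U)=g(X,\phi V)=0$. These are exactly the same relations (\ref{eqn3.3}) and (\ref{eqn4.5}) exploited in the earlier propositions.

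Next I would plug these zeros into (\ref{eqn1.3v}): the $f_1$-block vanishes (both scalars are zero), the three $\eta$-terms in the $\{f_3+(f_1-f_3)^2\}$-block vanish, the outer $(f_1-f_3)^2$-block vanishes term by term, and within the $f_2$-block the coefficients $g(U,\phi X)$ and $g(V,\phi X)$ are zero. What remains is exactly
\[
\hat{\bar{R}}(U,V)X=2f_2\,g(U,\phi V)\,\phi X,
\]
which lies in $\Gamma(TM)$ because $\phi X\in\Gamma(TM)$ by invariance. This proves that $\hat{\bar{R}}(U,V)X\in\Gamma(TM)$ for every $X\in\Gamma(TM)$, i.e. $TM$ is invariant under $\hat{\bar{R}}(U,V)$.

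No serious obstacle is expected: the computation is a direct specialisation of (\ref{eqn1.3v}) along the same lines as the analogous propositions for $\widetilde{\bar{\nabla}}$ and $\bar{\nabla}'$. The only mild point to check is that the two extra curvature terms in $\hat{\bar{R}}$ compared with $\bar{R}$ — namely the $(f_1-f_3)^2$ contributions — also vanish under these substitutions, which they do since each of them carries either $g(U,\phi X)$, $g(V,\phi X)$, or an $\eta(U),\eta(V)$ factor.
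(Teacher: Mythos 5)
Your proposal is correct and follows exactly the paper's own argument: substitute $U,V,X$ into (\ref{eqn1.3v}), use the vanishings $g(U,X)=g(V,X)=\eta(U)=\eta(V)=g(U,\phi X)=g(V,\phi X)=0$ coming from invariance, and conclude $\hat{\bar{R}}(U,V)X=2f_2\,g(U,\phi V)\,\phi X$, which is tangent. No gaps.
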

\begin{proof}
Replacing $X,Y,Z$ by $U,V,X$ in (\ref{eqn1.3v}), we get
\begin{eqnarray}
\label{eqn6.5.4}
\hat{\bar{R}}(U,V)X &=&f_1\big\{g(V,X)U-g(U,X)V\big\} +f_2\big\{g(U,\phi X)\phi V\\
\nonumber&-&g(V,\phi X)\phi U + 2g(U,\phi V)\phi X\big\}\\
\nonumber&+&\{f_3+(f_1-f_3)^2\}\big\{\eta(U)\eta(X)V-\eta(V)\eta(X)U\\
\nonumber&+&g(U,X)\eta(V)\xi - g(V,X)\eta(U)\xi\big\}\\
\nonumber&+&(f_1-f_3)^2\big\{g(U,\phi X)\phi V- g(V,\phi X)\phi U\big\}.
\end{eqnarray}
As $M$ is invariant, $U\in \Gamma(T^\bot M)$, we have
\begin{equation}\label{eqn6.5.5}
g(X,\phi U)=-g(\phi X,U)=g(\phi V,X)=0
\end{equation}
 for any $X\in \Gamma(TM)$. Using (\ref{eqn6.5.5}) in (\ref{eqn6.5.4}), we  have
 \begin{equation}\label{eqn6.5.6}
 \hat{\bar{R}}(U,V)X=2f_2g(U,\phi V)\phi X,
 \end{equation}
 which is tangent as $\phi X$ is tangent.
This proves the proposition.
\end{proof}
\begin{proposition}
Let $M$ be a connected submanifold of $\bar{M}^{2n+1}(f_1,f_2,f_3)$ with respect to
$\hat{\bar{\nabla}}$. If $3f_2\neq-(f_1-f_3)^2$ on $ M$ and
$T^\bot M$ is invariant under the action of $\hat{\bar{R}}(U,V)$, $U,V\in\Gamma(T^\bot M)$,
then $M$ is either invariant or anti-invariant.
\end{proposition}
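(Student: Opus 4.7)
The plan is to mirror the proof of Lemma $5.2$, interchanging the roles of $TM$ and $T^\perp M$. I would first substitute $X=U$, $Y=V$, $Z=U$ in (\ref{eqn1.3v}) with $U,V\in\Gamma(T^\perp M)$ to get an explicit expression for $\hat{\bar{R}}(U,V)U$ and then isolate the single obstruction to normality.

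Since $\xi$ is tangent to $M$, normality of $U,V$ gives $\eta(U)=g(U,\xi)=0$ and $\eta(V)=0$, so every $\eta$-factor term drops out at once. The identity $g(U,\phi U)=0$, immediate from (\ref{eqn2.4}), annihilates the $\phi V$ pieces in both the $f_2$ and $(f_1-f_3)^2$ contributions. The $f_1$-part $f_1\{g(V,U)U-g(U,U)V\}$ is manifestly normal. Using $g(V,\phi U)=-g(U,\phi V)$ to combine the $f_2$-piece $f_2\{-g(V,\phi U)\phi U+2g(U,\phi V)\phi U\}$ with the $(f_1-f_3)^2$-piece $-(f_1-f_3)^2 g(V,\phi U)\phi U$, the surviving terms consolidate to
\[
\hat{\bar{R}}(U,V)U \;=\; f_1\{g(V,U)U-g(U,U)V\}+\bigl[\,3f_2+(f_1-f_3)^2\,\bigr]\,g(U,\phi V)\,\phi U .
\]

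Under the hypothesis $3f_2\neq -(f_1-f_3)^2$ the scalar factor is nonzero on $M$, so the assumption $\hat{\bar{R}}(U,V)U\in\Gamma(T^\perp M)$ for all $U,V\in\Gamma(T^\perp M)$ reduces to the requirement that $g(U,\phi V)\,\phi U$ be normal to $M$. From this point the argument is identical to that of Lemma $3.2$ of \cite{ALEGRE3}, reproduced in Lemma $5.2$ of this section: pointwise one obtains the dichotomy that either the tangential component of $\phi U$ vanishes or $g(U,\phi V)=0$ for every normal $V$, after which connectedness of $M$ and smoothness of the structure tensors force a single alternative globally, yielding that $M$ is either invariant or anti-invariant.

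The principal obstacle, as in Lemma $5.2$, is the final pointwise-to-global step: one must verify that the two branches of the dichotomy correspond unambiguously to invariance and anti-invariance of $M$, and that continuity together with the connectedness hypothesis precludes switching between branches at different points of $M$.
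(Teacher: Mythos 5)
Your proposal is correct and follows exactly the route the paper intends: the paper's own proof is a one-line reference ("similar to Lemma 5.2, imposing that $\hat{\bar{R}}(U,V)U$ is normal"), and your explicit computation of $\hat{\bar{R}}(U,V)U=f_1\{g(V,U)U-g(U,U)V\}+[3f_2+(f_1-f_3)^2]g(U,\phi V)\phi U$ is the correct fleshing-out of that remark, with the obstruction term and the role of the hypothesis $3f_2\neq-(f_1-f_3)^2$ identified precisely as in Lemma 5.2. The remaining pointwise-to-global step you defer to Lemma 3.2 of \cite{ALEGRE3} is exactly what the paper defers as well.
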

\begin{proof}
  The proof is similar as the proof of Lemma $5.2$, just imposing that $\hat{\bar{R}}(U,V)U$ is normal for any $U,V\in \Gamma(T^\bot M)$.
\end{proof}
\section{Submanifolds of $\bar{M}^{2n+1}(f_1,f_2,f_3)$ with $\stackrel{*}{\bar{\nabla}}$}
\begin{lemma}
If $M$ is either invariant or anti-invarint submanifold of $\bar{M}^{2n+1}(f_1,f_2,f_3)$ with
respect to $\stackrel{*}{\bar{\nabla}}$, then
$\stackrel{*}{\bar{R}}(X,Y)Z$ is tangent to $M$ and $\stackrel{*}{\bar{R}}(X,Y)V$ is
normal to $M$ for any $X,Y,Z\in \Gamma(TM)$ and $V\in \Gamma(T^\bot M)$.
\end{lemma}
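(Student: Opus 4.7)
The plan is to mirror the proofs of Lemmas $3.1$, $4.1$, and $5.1$, simply substituting the Tanaka--Webster curvature formula (\ref{eqn1.3u}) in place of the other curvature expressions. I would split the argument into the invariant and anti-invariant cases and, in each, inspect which summands of (\ref{eqn1.3u}) survive and on which side of $TM \oplus T^\perp M$ they land.

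For $\stackrel{*}{\bar{R}}(X,Y)Z$ with $Z\in\Gamma(TM)$: if $M$ is invariant then $\phi$ preserves $TM$, so the $\phi X,\phi Y,\phi Z$ summands are tangent and the remaining $f_1$, $\{f_3+(f_1-f_3)^2\}$ and $\xi$ terms are trivially tangent. If $M$ is anti-invariant then $\phi X,\phi Y,\phi Z\in\Gamma(T^\perp M)$, which forces $g(X,\phi Y)=g(X,\phi Z)=g(Y,\phi Z)=0$ and wipes out every $\phi$-bearing summand in (\ref{eqn1.3u}), leaving only tangential terms. This is the same bookkeeping pattern as in Lemma $5.1$, with the additional Tanaka--Webster contribution $2(f_1-f_3)g(X,\phi Y)\phi Z$ vanishing by the same identity.

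For $\stackrel{*}{\bar{R}}(X,Y)V$ with $V\in\Gamma(T^\perp M)$: since $\xi$ is tangent and $X,Y$ are tangent, one has $\eta(V)=g(\xi,V)=0$ and $g(X,V)=g(Y,V)=0$, which kills the $f_1$ block and the $\{f_3+(f_1-f_3)^2\}$ block entirely. In the invariant case, $\phi V$ is normal because $g(\phi V,X)=-g(V,\phi X)=0$ for every $X\in\Gamma(TM)$, so the surviving multiples of $\phi V$ (and of $\phi X,\phi Y$, all normal) are all normal. In the anti-invariant case, $g(X,\phi Y)=0$ since $\phi Y\in\Gamma(T^\perp M)$, so the $\phi V$ terms drop out and what remains is a combination of $\phi X$ and $\phi Y$, both of which are normal.

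No real obstacle is expected, and no new structural ideas beyond those already used in Sections~$3$--$5$ are needed. The only point requiring a little care is keeping track of where $\phi V$ lies: invariance of $M$ is precisely what forces $\phi V\in\Gamma(T^\perp M)$ and hence makes the leftover $\phi V$-terms normal, while in the anti-invariant case the vanishing of $g(X,\phi Y)$ is what removes those same terms from the formula.
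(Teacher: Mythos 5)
Your proposal is correct and follows essentially the same term-by-term inspection of (\ref{eqn1.3u}) that the paper uses. In fact, in the anti-invariant case for $\stackrel{*}{\bar{R}}(X,Y)V$ you are slightly more careful than the paper: the paper asserts $\stackrel{*}{\bar{R}}(X,Y)V=0$, whereas the surviving terms $g(X,\phi V)\phi Y-g(Y,\phi V)\phi X$ need not vanish but are, as you observe, normal, which is all the lemma requires.
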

\begin{proof}
If $M$ is invariant then from (\ref{eqn1.3u}) we say that $\stackrel{*}{\bar{R}}(X,Y)Z$ is
tangent to $M$ because $\phi X$ and $\phi Y$ are tangent to $M$.

\noindent If $M$ is anti-invariant then
\begin{equation}\label{eqn5.1}
g(X,\phi Z)=g(Y,\phi Z)=g(\phi X,Z)=g(\phi Y,Z)=0.
\end{equation}
From (\ref{eqn1.3u}) and (\ref{eqn5.1}) we have
\begin{eqnarray}
 \label{eqn5.2}
\stackrel{*}{\bar{R}}(X,Y)Z &=&f_1\big\{g(Y,Z)X-g(X,Z)Y\big\}\\
\nonumber&+&\{f_3+(f_1-f_3)^2\}\big\{\eta(X)\eta(Z)Y- \eta(Y)\eta(Z)X\\
\nonumber&+&g(X,Z)\eta(Y)\xi - g(Y,Z)\eta(X)\xi\big\}
\end{eqnarray}
which is tangent. \\
If $M$ is invariant from (\ref{eqn1.3u}) we have $\stackrel{*}{\bar{R}}(X,Y)V$ normal to $M$ and if $M$ is anti-invariant then $\stackrel{*}{\bar{R}}(X,Y)V=0$ i.e. $\stackrel{*}{\bar{R}}(X,Y)V$ normal to $M$ for any $X,Y\in \Gamma(TM)$ and $V\in \Gamma(T^\bot M)$.
This proves the Lemma.
\end{proof}
\begin{lemma}
let $M$ be a connected submanifold of $\bar{M}^{2n+1}(f_1,f_2,f_3)$ with respect to $\stackrel{*}{\bar{\nabla}}$.
If $\{3f_2+2(f_1-f_3)+(f_1-f_3)^2\}(p)\neq0$ for each $p\in M$ and $TM$ is invariant
under the action of $\stackrel{*}{\bar{R}}(X,Y)$, $X,Y\in \Gamma(TM)$,
then $M$ is either invariant or anti-invariant.
\end{lemma}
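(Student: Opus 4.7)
The strategy is to follow the template established in Lemmas 3.5, 4.2, and 5.2: specialize the curvature identity (\ref{eqn1.3u}) to $Z=X$, collect everything that is not manifestly tangent into a single multiple of $\phi X$, and then invoke the dichotomy argument of Lemma 3.2 of \cite{ALEGRE3}.

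First, I would set $Z=X$ in (\ref{eqn1.3u}). Using $g(X,\phi X)=0$ (from (\ref{eqn2.4})) and the identity $g(X,\phi Y)=-g(Y,\phi X)$, the three sources of $\phi X$ collapse: the $f_2$-bracket contributes $-g(Y,\phi X)+2g(X,\phi Y)=-3g(Y,\phi X)$, the $(f_1-f_3)^2$-bracket contributes $-g(Y,\phi X)$, and the last summand yields $-2(f_1-f_3)g(Y,\phi X)$. Hence
\begin{align*}
\stackrel{*}{\bar{R}}(X,Y)X &= -\{3f_2+2(f_1-f_3)+(f_1-f_3)^2\}\,g(Y,\phi X)\,\phi X \\
&\quad + (\text{terms in the span of } X,\,Y,\,\xi),
\end{align*}
and the remainder lies in $TM$ because $\xi$ is tangent.

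Next, the assumption that $TM$ is invariant under $\stackrel{*}{\bar{R}}(X,Y)$ forces the normal part of the $\phi X$-term to vanish, so that
\begin{align*}
\{3f_2+2(f_1-f_3)+(f_1-f_3)^2\}(p)\,g(Y,\phi X)\,F(X)=0
\end{align*}
for all $X,Y\in\Gamma(TM)$ and all $p\in M$. Since the scalar factor is nowhere zero by hypothesis, $g(Y,\phi X)F(X)=0$. Fixing $X$ and varying $Y$: if $F(X)\neq 0$ at some point, then $g(Y,TX)=g(Y,\phi X)=0$ for every tangent $Y$, whence $TX=0$. Thus at each point and for each tangent $X$, either $F(X)=0$ or $TX=0$. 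The connectedness of $M$, together with the dichotomy argument of Lemma 3.2 in \cite{ALEGRE3}, then upgrades this pointwise alternative to the global conclusion that $M$ is invariant or anti-invariant.

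The main obstacle I expect is keeping the signs straight while merging the three distinct $\phi X$ contributions into the single coefficient $3f_2+2(f_1-f_3)+(f_1-f_3)^2$. The extra summand $2(f_1-f_3)g(X,\phi Y)\phi Z$ in (\ref{eqn1.3u}), absent from the Schouten--van Kampen curvature (\ref{eqn1.3v}), is precisely what supplies the linear term $2(f_1-f_3)$ and thereby distinguishes the present hypothesis from that of Lemma 5.2; once the bookkeeping is done, the rest of the argument is a direct transcription of the earlier cases.
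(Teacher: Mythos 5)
Your proposal is correct and follows exactly the route the paper takes: set $Z=X$ in (\ref{eqn1.3u}), observe that every term except the $\phi X$-multiple with coefficient $-\{3f_2+2(f_1-f_3)+(f_1-f_3)^2\}g(Y,\phi X)$ is automatically tangent, and then invoke the dichotomy argument of Lemma 3.2 of \cite{ALEGRE3}. Your sign bookkeeping for the three $\phi X$ contributions is accurate, and you in fact spell out the normal-component/pointwise-alternative step more explicitly than the paper does.
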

\begin{proof}
For $X,Y\in \Gamma(TM)$, we have from (\ref{eqn1.3u}) that
\begin{eqnarray}
\label{eqn5.3}
  \stackrel{*}{\bar{R}}(X,Y)X&=&f_1\big\{g(Y,X)X-g(X,X)Y\big\}+f_2\big\{g(X,\phi X)\phi Y\\
\nonumber&-& g(Y,\phi X)\phi X + 2g(X,\phi Y)\phi X\big\}\\
\nonumber&+&\{f_3+(f_1-f_3)^2\}\big\{\eta(X)\eta(X)Y-\eta(Y)\eta(X)X \\
\nonumber&+&g(X,X)\eta(Y)\xi- g(Y,X)\eta(X)\xi\big\}\\
\nonumber&+&(f_1-f_3)^2\big\{g(X,\phi X)\phi Y- g(Y,\phi X)\phi X\big\}\\
\nonumber&+&2(f_1-f_3)g( X, \phi Y)\phi X.
\end{eqnarray}
Now we see that  $\stackrel{*}{\bar{R}}(X,Y)X$ should be tangent if $\{3f_2+2(f_1-f_3)+(f_1-f_3)^2\}(p)g(Y,\phi X)\phi X$ is tangent.
Since $\{3f_2+2(f_1-f_3)+(f_1-f_3)^2\}(p)\neq0$ then by similar way of
proof of Lemma $3.2$ of \cite{ALEGRE3} we can proved that either $M$
is invariant or anti-invariant. This proves the Lemma.
\end{proof}
From Lemma $6.1$ and Lemma $6.2$, we have
\begin{theorem}
Let $M$ be a connected submanifold of $\bar{M}^{2n+1}(f_1,f_2,f_3)$ with respect
to $\stackrel{*}{\bar{\nabla}}$. If $\{3f_2+2(f_1-f_3)+(f_1-f_3)^2\}(p)\neq0$, then $M$ is
either invariant or anti-invariant if and only if $TM$ is invariant under the
action of $\stackrel{*}{\bar{R}}(X,Y)$ for all $X,Y\in\Gamma(TM)$.
\end{theorem}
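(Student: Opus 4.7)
The theorem is a direct consolidation of Lemma $6.1$ and Lemma $6.2$ into a biconditional, so my plan is simply to invoke them in the two directions. The only thing to check is that the conclusion of each lemma matches exactly one implication of the biconditional, and that the hypotheses line up.

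For the forward implication (``only if''), I would assume that $M$ is either invariant or anti-invariant. By Lemma $6.1$, this forces $\stackrel{*}{\bar{R}}(X,Y)Z$ to be tangent to $M$ for every $X,Y,Z\in\Gamma(TM)$, which is precisely the statement that $TM$ is invariant under the action of $\stackrel{*}{\bar{R}}(X,Y)$ for all $X,Y\in\Gamma(TM)$. Note that the algebraic nondegeneracy hypothesis $\{3f_2+2(f_1-f_3)+(f_1-f_3)^2\}(p)\neq 0$ is not needed for this direction, just as $f_2(p)\neq 0$ was unnecessary in the analogous Theorems $3.1$ and $4.1$.

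For the converse implication (``if''), I would assume that $M$ is connected and that $TM$ is invariant under $\stackrel{*}{\bar{R}}(X,Y)$ for all $X,Y\in\Gamma(TM)$. Since the scalar factor $\{3f_2+2(f_1-f_3)+(f_1-f_3)^2\}(p)$ is nowhere zero on $M$, Lemma $6.2$ applies verbatim and yields that $M$ is either invariant or anti-invariant.

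Combining the two implications gives the biconditional stated in the theorem. The only nonroutine point in the whole argument is really just the bookkeeping of hypotheses: the connectedness assumption and the nonvanishing of $3f_2+2(f_1-f_3)+(f_1-f_3)^2$ enter exclusively through Lemma $6.2$, while Lemma $6.1$ supplies the other direction unconditionally. There is no genuine obstacle here, since the two lemmas were tailored precisely so as to yield this equivalence on composition.
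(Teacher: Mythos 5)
Your proposal is correct and matches the paper exactly: the paper derives this theorem by simply combining Lemma $6.1$ (for the ``only if'' direction) with Lemma $6.2$ (for the ``if'' direction), which is precisely the two-step invocation you describe. Your additional remark that the nonvanishing hypothesis is only needed for the converse direction is accurate and slightly more careful than the paper's own one-line justification.
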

\begin{proposition}
Let $M$ be a submanifold of $\bar{M}^{2n+1}(f_1,f_2,f_3)$ with respect to
$\stackrel{*}{\bar{\nabla}}$. If $M$ is invariant, then $TM$ is invariant
under the action of $\stackrel{*}{\bar{R}}(U,V)$ for any $U,V\in \Gamma(T^\bot M)$.
\end{proposition}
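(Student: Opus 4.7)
The plan is to mimic the proofs of Proposition 3.1, Proposition 4.1, and Proposition 5.1, which all follow the same three-step pattern: substitute normal vectors into the curvature identity, kill the vanishing inner products forced by the invariance hypothesis, and read off the result as a scalar multiple of $\phi X$.

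First, I would set $X\to U$, $Y\to V$, $Z\to X$ in equation (\ref{eqn1.3u}) to obtain an explicit expression for $\stackrel{*}{\bar{R}}(U,V)X$ with $U,V\in\Gamma(T^\bot M)$ and $X\in\Gamma(TM)$. This produces a sum of six blocks: the $f_1$-block, the $f_2$-block, the $\{f_3+(f_1-f_3)^2\}$-block, the $(f_1-f_3)^2$-block, and the final $2(f_1-f_3)g(U,\phi V)\phi X$ term coming from the extra Tanaka–Webster correction.

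Next, I would exploit the invariance hypothesis $\phi(TM)\subset TM$. By (\ref{eqn2.4}), this immediately forces $\phi(T^\bot M)\subset T^\bot M$, since for any $X\in\Gamma(TM)$ and $U\in\Gamma(T^\bot M)$ we have $g(\phi U,X)=-g(U,\phi X)=0$. Consequently the following inner products all vanish: $g(U,X)=g(V,X)=0$ (normal vs tangent), $g(U,\phi X)=g(V,\phi X)=0$ (same reasoning after noting $\phi X\in\Gamma(TM)$), and $\eta(U)=\eta(V)=0$ because $\xi$ is tangent to $M$. This is exactly the content of equation (\ref{eqn6.5.5}) transplanted to the Tanaka–Webster setting.

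After substituting these zeros into the expanded formula, every term vanishes except those containing $g(U,\phi V)\phi X$, namely the middle contribution of the $f_2$-block and the final $2(f_1-f_3)$ correction. I expect the result to collapse to
\begin{equation*}
\stackrel{*}{\bar{R}}(U,V)X=2\{f_2+(f_1-f_3)\}g(U,\phi V)\phi X,
\end{equation*}
which is tangent to $M$ since $\phi X\in\Gamma(TM)$ by invariance. There is no genuine obstacle here; the only thing to be careful about is keeping track of the extra $2(f_1-f_3)g(X,\phi Y)\phi Z$ term that distinguishes $\stackrel{*}{\bar{R}}$ from $\hat{\bar{R}}$, which is precisely what alters the scalar coefficient relative to Proposition 5.1.
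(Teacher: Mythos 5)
Your proposal is correct and follows essentially the same route as the paper: substitute $U,V,X$ into (\ref{eqn1.3u}), use the invariance of $M$ to kill all inner products of the form $g(U,X)$, $g(U,\phi X)$, $\eta(U)$, and read off $\stackrel{*}{\bar{R}}(U,V)X=\{2f_2+2(f_1-f_3)\}g(U,\phi V)\phi X$, which is tangent. Your final coefficient agrees exactly with equation (\ref{eqn5.6}) of the paper.
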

\begin{proof}
Replacing $X,Y,Z$ by $U,V,X$ in (\ref{eqn1.3u}), we get
\begin{eqnarray}
\label{eqn5.4}
\stackrel{*}{\bar{R}}(U,V)X &=&f_1\big\{g(V,X)U-g(U,X)V\big\}+f_2\big\{g(U,\phi X)\phi V\\
\nonumber&-&g(V,\phi X)\phi U + 2g(U,\phi V)\phi X\big\}\\
\nonumber&+&\{f_3+(f_1-f_3)^2\}\big\{\eta(U)\eta(X)V-\eta(V)\eta(X)U\\
\nonumber&+&g(U,X)\eta(V)\xi- g(V,X)\eta(U)\xi\big\}\\
\nonumber&+&(f_1-f_3)^2\big\{g(U,\phi X)\phi V- g(V,\phi X)\phi U \big\}\\
\nonumber&+&2(f_1-f_3)g(U,\phi V)\phi X.
\end{eqnarray}
As $M$ is invariant, $U\in \Gamma(T^\bot M)$, we have
\begin{equation}\label{eqn5.5}
g(X,\phi U)=-g(\phi X,U)=g(\phi V,X)=0
\end{equation}
 for any $X\in \Gamma(TM)$. Using (\ref{eqn5.5}) in (\ref{eqn5.4}), we  have
 \begin{equation}\label{eqn5.6}
 \stackrel{*}{\bar{R}}(U,V)X=\{2f_2+2(f_1-f_3)\}g(U,\phi V)\phi X,
 \end{equation}
 which is tangent as $\phi X$ is tangent.
This proves the proposition.
\end{proof}
\begin{proposition}
Let $M$ be a connected submanifold of $\bar{M}^{2n+1}(f_1,f_2,f_3)$ with respect to
$\stackrel{*}{\bar{\nabla}}$. If $\{3f_2+2(f_1-f_3)+(f_1-f_3)^2\}(p)\neq0$ for each $p\in M$ and
$T^\bot M$ is invariant under the action of $\stackrel{*}{\bar{R}}(U,V)$, $U,V\in\Gamma(T^\bot M)$,
then $M$ is either invariant or anti-invariant.
\end{proposition}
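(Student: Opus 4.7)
My plan is to mirror the argument of Lemma $6.2$, swapping the roles of the tangent and normal bundles: I work with $U,V\in\Gamma(T^\bot M)$ in place of $X,Y\in\Gamma(TM)$ and impose that $\stackrel{*}{\bar R}(U,V)U$ be normal to $M$ instead of tangent. First I substitute $X,Y,Z\mapsto U,V,U$ in (\ref{eqn1.3u}). Since $\xi\in\Gamma(TM)$ we have $\eta(U)=g(\xi,U)=0=\eta(V)$, killing the entire $\{f_3+(f_1-f_3)^2\}$-bracket. Using $g(U,\phi U)=0$ together with $g(U,\phi V)=-g(V,\phi U)$, the $f_2$-piece, the $(f_1-f_3)^2$-bracket, and the $2(f_1-f_3)g(X,\phi Y)\phi Z$-term coalesce into a single scalar multiple of $\phi U$, producing exactly the factor appearing in the hypothesis:
\[
\stackrel{*}{\bar R}(U,V)U \;=\; f_1\bigl[g(V,U)U-g(U,U)V\bigr]\;-\;\bigl\{3f_2+2(f_1-f_3)+(f_1-f_3)^2\bigr\}\,g(V,\phi U)\,\phi U.
\]

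Next, I decompose $\phi U=(\phi U)^\top+(\phi U)^\bot$ with $(\phi U)^\top\in\Gamma(TM)$ and $(\phi U)^\bot\in\Gamma(T^\bot M)$, and note that $g(V,\phi U)=g(V,(\phi U)^\bot)$ because $V\in\Gamma(T^\bot M)$. The $f_1$-bracket is already normal, so requiring $\stackrel{*}{\bar R}(U,V)U$ to be normal forces the tangential component of the remaining term to vanish. Combined with the standing hypothesis $\{3f_2+2(f_1-f_3)+(f_1-f_3)^2\}(p)\neq0$, this reduces to the pointwise identity
\[
g(V,(\phi U)^\bot)\,(\phi U)^\top \;=\; 0 \qquad \text{for all } U,V\in T^\bot_p M.
\]

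To finish, I fix $p\in M$ and $U\in T^\bot_p M$: if $(\phi U)^\bot\neq 0$, choosing $V=(\phi U)^\bot$ yields $(\phi U)^\top=0$, i.e.\ $\phi U\in T^\bot_p M$; otherwise $\phi U=(\phi U)^\top\in T_p M$. Thus $T^\bot_p M$ is the set-theoretic union of the two linear subspaces $\{U:\phi U\in T^\bot M\}$ and $\{U:\phi U\in TM\}$. Since a real vector space is never the union of two proper subspaces, one of them exhausts $T^\bot_p M$. The connectedness of $M$ together with the continuity argument of Lemma $3.2$ of \cite{ALEGRE3} (already invoked in Lemmas $3.5$, $4.2$, $5.2$ and $6.2$ of the present paper) then promotes the dichotomy globally: either $\phi(T^\bot M)\subset T^\bot M$ everywhere, which forces $M$ invariant, or $\phi(T^\bot M)\subset TM$ everywhere, which, combined with $\phi^2=-\mathrm{id}+\eta\otimes\xi$ on $\xi^\perp$, forces $M$ anti-invariant.

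The main obstacle is the last implication, passing from $\phi(T^\bot M)\subset TM$ to $\phi(TM)\subset T^\bot M$: this requires a dimension/duality argument on $\xi^\perp$ using $\phi^2=-\mathrm{id}$ and the non-degeneracy of $g$. That step is precisely the content packaged into Lemma $3.2$ of \cite{ALEGRE3} and reused verbatim throughout this paper, so I would cite it rather than reproduce the calculation.
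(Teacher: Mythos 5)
Your expansion of $\stackrel{*}{\bar{R}}(U,V)U$ is correct and is exactly the route the paper intends: the paper's own proof of this proposition is only the one-line remark that one argues as in Lemma $6.2$ after imposing that $\stackrel{*}{\bar{R}}(U,V)U$ be normal, so your cancellation down to the single coefficient $3f_2+2(f_1-f_3)+(f_1-f_3)^2$ and the union-of-two-subspaces dichotomy on $T^\bot_pM$ make explicit precisely what the paper leaves implicit.

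The genuine gap is the step you flag at the end, and it is not repaired by citing Lemma $3.2$ of \cite{ALEGRE3}. That lemma works with tangent vectors: from $g(Y,TX)\,FX=0$ it concludes, for each $X\in T_pM$, that $TX=0$ or $FX=0$, and the resulting dichotomy on $T_pM$ is literally the definition of invariant versus anti-invariant. Your argument instead produces a dichotomy on the normal space: at each point either $\phi(T^\bot_pM)\subset T^\bot_pM$ or $\phi(T^\bot_pM)\subset T_pM$. The first alternative does yield invariance by skew-symmetry of $\phi$. The second does not yield anti-invariance: $\phi(T^\bot_pM)\subset T_pM$ says nothing about where $\phi$ sends the part of $T_pM$ orthogonal to $\xi$, to $T^\bot_pM$ and to $\phi(T^\bot_pM)$; that part has dimension $2n-2\dim T^\bot_pM$, is $\phi$-invariant, and is nonzero whenever $\dim M>n+1$. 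Concretely, any hypersurface tangent to $\xi$ satisfies $\phi(T^\bot M)\subset TM$ automatically (since $g(\phi V,V)=0$) and satisfies the hypothesis of the proposition vacuously (for a one-dimensional normal bundle $\stackrel{*}{\bar{R}}(U,V)=0$ by antisymmetry of (\ref{eqn1.3u}) in its first two arguments), yet for $n>1$ it is neither invariant nor anti-invariant. So what your argument actually proves is the dichotomy ``$T^\bot M$ is $\phi$-invariant or $\phi(T^\bot M)\subset TM$,'' not the stated conclusion; no dimension or duality argument on $\xi^\perp$ can close this, because the missing implication is false. (The same criticism applies to the paper's one-line proof; your write-up has the merit of exposing where it breaks.)
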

\begin{proof}
The proof is similar as the proof of Lemma $6.2$, just considering
  that $\stackrel{*}{\bar{R}}(U,V)U$ is normal for any $U,V\in \Gamma(T^\bot M)$.
\end{proof}
\noindent{\bf Acknowledgement:} The first author (P. Mandal)  gratefully acknowledges to
the CSIR(File No.:09/025(0221)/2017-EMR-I), Govt. of India for financial assistance.
The Third author (S. K. Hui) are thankful to University of Burdwan for providing administrative and technical support.

\vspace{0.1in}

\noindent Pradip Mandal and Shyamal Kumar Hui\\
Department of Mathematics, The University of Burdwan, Burdwan -- 713104, West Bengal, India\\
E-mail: pm2621994@gmail.com; skhui@math.buruniv.ac.in\\

\vspace{0.1in}
\noindent Shyam Kishor\\
Department of Mathematics and Astronomy, University of Lucknow, Lucknow -- 226007, India\\
E-mail: skishormath@gmail.com
\end{document}